\DeclareMathAlphabet{\mathcal}{OMS}{cmsy}{m}{n}
\theoremstyle{plain}
\newtheorem {theorem}{Theorem}[section]
\newtheorem {lemma}[theorem]{Lemma}
\newtheorem {corollary} [theorem]{Corollary}
\newtheorem {proposition} [theorem]{Proposition}
\theoremstyle{definition}
\newtheorem{definition}[theorem]{Definition}
\theoremstyle{remark}
\newtheorem{remark}[theorem]{Remark}
 \numberwithin{equation}{section} 
\newcommand{\R}{\mathbb{R}}
\newcommand{\N}{\mathbb{N}}
\newcommand{\D}{\mathcal{D}}
\renewcommand{\L}{\mathcal{L}}
\newcommand{\I}{\mathcal{I}}
\newcommand{\F}{\mathcal{F}}
\newcommand{\oo}{\infty}
\newcommand{ \eee}{\text{e}}
\newcommand{\x}{\times}
\newcommand{\e}{\varepsilon}
\renewcommand{\d}{\partial}
\renewcommand{\a}{\alpha}
\renewcommand{\k}{\kappa}
\newcommand{\g}{\gamma}
\newcommand{\la}{\lambda}
\newcommand{\de}{\delta}
\newcommand{\1}{{-1}}
\renewcommand{\gg}{\dot{\gamma}}
\newcommand{\vspan}{\mathrm{span}}
\newcommand{\Lie}{\mathrm{Lie}}
\renewcommand{\.}{\dots}
\newcommand{\mc}{\mathcal}
\newcommand{\mr}{\mathrm}
\newcommand{\wt}{\widetilde}
\newcommand{\IM}{\mathrm{Im}}
\newcommand{\Span}{\mathrm{span}}
\newcommand{\vect}{\mathrm{Vec}}
\newcommand{\be}{\begin{equation}}
\newcommand{\ee}{\end{equation}}
\renewcommand{\k}{\mathrm k}
\renewcommand{\k}{\kappa}
\newcommand{\Vecc}{{\mathrm{Vec}}}
\newcommand{\dxx}[1]{{\frac{\d{#1}}{\d x_1}}}
\newcommand{\dxy}[2]{{\frac{\d{#1}}{\d{#2}}}}
\begin{document}
	
\author[Le Donne]{Enrico Le Donne$^1$}
\address{$^1$University of Fribourg, Chemin du Mus\'ee~23, 1700 Fribourg, Switzerland}
\email{enrico.ledonne@unifr.ch}

\author[Paddeu]{Nicola Paddeu$^2$}
\address{$^2$University of Fribourg, Chemin du Mus\'ee~23, 1700 Fribourg, Switzerland}
\email{nicola.paddeu@unifr.ch}{}

\author[Socionovo]{Alessandro Socionovo$^3$}
\address{$^3$Laboratoire Jacques-Louis Lions, CNRS, Inria, Sorbonne Université, Université de Paris, France}
\email{alessandro.socionovo@sorbonne-universite.fr}

\title{Metabelian distributions and sub-Riemannian geodesics}

\maketitle

\begin{abstract}
	We begin by characterizing metabelian distributions in terms of principal bundle structures. Then, we prove that in sub-Riemannian manifolds with metabelian distributions of rank $r$, the projection of strictly singular trajectories to some $r$-dimensional manifold must remain within an analytic variety. As a consequence, for rank-2 metabelian distributions, geodesics are of class $C^1$.
	
\end{abstract}

\tableofcontents

\section{Introduction}
The regularity of geodesics (i.e., isometric embeddings of intervals) is one of the most difficult and important unsolved problem in sub-Riemannian geometry. A sub-Riemannian manifold is a triplet $(M,\D,g)$, where $M$ is a smooth manifold, $\D\subset TM$ is a smooth bracket-generating subbundle of the tangent bundle $TM$, and $g$ is a smooth Riemannian metric on $\D$. A {\em horizontal curve} (also called an {\em admissible trajectory}) is an absolutely continuous curve that is almost everywhere tangent to $\D$. The distance between two points $p,q\in M$ is obtained by taking the infimum of lengths, with respect to the metric $g$, among all horizontal curves joining $p$ and $q$. All the basic needed facts and definitions concerning sub-Riemannian geometry are recalled in Section 2.

The Pontryagin Maximum Principle (from now on the PMP, see Proposition~\ref{prop:PMP} below) provides necessary conditions for horizontal curves in sub-Riemannian manifolds to be geodesics. Horizontal curves satisfying these necessary conditions are called {\em extremal} curves. The PMP derives from the first-order differental analysis of the {\em end-point map} (see~\cite[Section 8]{ABB20}). It shows that there exist two non-disjoint classes of extremals, that are called {\em normal} and {\em singular} (also called {\em abnormal}) curves, respectively. Normal geodesics, which are the only ones appearing in Riemannian geometry, are smooth. Abnormal curves correspond to singular points of the end-point map, i.e., points where the differential of the map is not surjective. 

A priori, abnormal curves are absolutely continuous by definition. So, the main difficulty in studying the regularity of sub-Riemannian geodesics is the presence of {\em strictly singular} (also called {\em strictly abnormal}) geodesics, i.e., geodesics that are abnormal but not normal. To study the regularity of strictly abnormal minimizers, a second-order analysis of the end-point map was developed: second-order necessary conditions (Goh conditions, see Proposition~\ref{prop:Goh} below) have to be satisfied by length-minimizing strictly abnormal curves.

The reader can find all the details concerning the end-point map, the PMP, and the Goh conditions in~\cite{AS96, ABB20, AgrSac}. Recently, these type of techniques were developed to a $n$-order analysis of the end-point map, providing general $n$-order Goh-type conditions for the minimality, see~\cite{BMS24}.

There exist examples of strictly abnormal geodesics: the first one was discovered in~\cite{Mon94} by Montgomery, and other classes of examples are studied in~\cite{LS95}. A recent algorithm to produce abnormal curves is presented in~\cite{Hak21}, but length-minimizing properties of these curves are not known. Anyway, all known examples of abnormal minimizers are smooth curves, and this shows how the nature of the problem is very difficult.

A new recent approach to the problem is based on the direct study of possible singularity points on the curve. If, at a point, the set of blowups (or, equivalently, the tangent cone, see~\cite[Definition 3.1]{MPV18-b}) is a singleton, then either the curve is differentiable at the point or the unique blowup is a corner, i.e., at the point, the right and the left derivatives of the curve exist and are linearly independent. Partially in~\cite{LM08}, and later in~\cite{HL16}, it was proved that geodesics cannot have this kind of singularities. Later, this result have been improved in~\cite{MPV18}, where it is shown that, at every point, the set of blowups to a geodesic contains at least one line (or a half line, for extreme points). In~\cite{HL23} an even stronger result is provided for Carnot groups (see Definition~\ref{def:Carnot_group}): the projections of blowups of geodesics to a Carnot group of one step lower are geodesics. Anyway, since geodesics cannot have corners, the uniqueness of the blowup for geodesics is equivalent to their differentiability, and it is still an open problem. In~\cite{MS21}, an attempt to solve this problem have been made by studying singularities of spiral type, where the set of blowups at the center contains all lines. 

The aim of this paper is to provide a new regularity result in a consistent class of sub-Riemannian manifolds, i.e., analytic {\em metabelian} rank-2 sub-Riemannian manifolds. Roughly speaking, a distribution is metabelian if, in a neighborhood of every point, there exists a frame $\F$ for the distribution such that the Lie algebra generated by iterated commutators of vector fields in $\F$ is abelian (see Definition~\ref{def:metabelian} below). Examples of metabelian distributions are Ehresmann connections on $G$-principal bundles, where the group $G$ is abelian (see Theorem~\ref{thm:Gpb} below). Such bundles naturally arise from the development of Gauge Theory in electrodynamics. This theory is known in literature as Yang-Mills Theory, see~\cite[Chapter 12]{Mon02}, and it is used to describe the motion of charged particles in electro-magnetic fields. As a matter of fact, the first example of a strictly abnormal minimizer cited above, was discoverd by Montgomery studying a particular $\R$-principal bundle on $\R^2$. Important classes of sub-Riemannian manifolds with metabelian distributions are metabelian Lie groups equipped with left-invariant distributions, such as the Hopf fibration, Jet spaces, and Lie groups of nilpotency step at most 3. We prove some results on metabelian distributions in Section 3. We point out that this section is of independent interest with respect to the regularity problem for strictly abnormal geodesics in sub-Riemannian manifolds.

Our main result is the following.

\begin{theorem}
	\label{thm:main}
	Geodesics in metabelian analytic equiregular sub-Riemannian manifolds of rank-2 are everywhere of class $C^1$, and analytic outside of a closed discrete set.
\end{theorem}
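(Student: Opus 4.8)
The plan is to combine the structural result on metabelian distributions (Theorem~\ref{thm:Gpb}) with the second-order Goh conditions and the known fact that geodesics admit no corners, in order to reduce the regularity question to the behaviour of a curve confined to an analytic variety inside an $r$-dimensional quotient manifold. First I would recall that, by the general theory cited in the introduction, any non-normal length-minimizer is strictly singular and hence satisfies the Goh conditions along its whole length; this is where the abelian structure enters, because for a metabelian rank-$2$ distribution the Goh conditions and the PMP can be pushed through the principal bundle picture and translated into the statement already announced in the abstract: the projection of a strictly singular minimizer to some $2$-dimensional manifold $N$ lies inside a fixed analytic variety $\Sigma \subset N$. I would therefore first prove that intermediate statement as a separate proposition (it is the ``some $r$-dimensional manifold'' claim of the abstract, specialised to $r=2$), using analyticity of $\D$ and $g$ to guarantee that the constraint cut out by the abnormality and Goh equations is genuinely analytic and not merely smooth.

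Next I would analyse the one-dimensional geometry of an analytic curve constrained to a two-dimensional analytic variety $\Sigma$. Outside the (closed, discrete, because $\dim \Sigma \le 2$ and $\Sigma$ is analytic) singular locus $\Sigma_{\mathrm{sing}}$, the variety $\Sigma$ is either a smooth analytic curve or an open subset of $N$; in the first case the projected minimizer is forced to move along a one-dimensional analytic manifold, hence is itself analytic there after reparametrisation, and in the second case the constraint is vacuous and one falls back on normal-type smoothness. The remaining work is the lift: I would show that controlling the projection to $N$ controls the whole horizontal curve in $M$, because the fibre directions are recovered by integrating the horizontal lift equation (this is exactly the principal-bundle content of Theorem~\ref{thm:Gpb}), so analyticity of the projection upgrades to analyticity of the minimizer away from the discrete bad set. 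At the finitely-many isolated points of $\Sigma_{\mathrm{sing}}$ that the curve may visit, analyticity can fail, but here I would invoke the no-corners theorem (\cite{HL16}, together with the blow-up refinements of \cite{MPV18}, \cite{HL23}): the tangent cone of a geodesic always contains a line and never consists of a single corner, and since on either side of such a point the curve is $C^1$ (indeed analytic) with one-sided derivatives that cannot be linearly independent, the two one-sided tangents must agree, giving $C^1$ regularity across the point. Stitching these pieces yields: analytic on the complement of a closed discrete set, and $C^1$ everywhere.

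The main obstacle I expect is the intermediate proposition — proving that the projected strictly singular minimizer really is trapped in an \emph{analytic} variety. The Goh conditions give a hierarchy of vanishing brackets paired against the abnormal covector, and one must check that, under the metabelian hypothesis, this hierarchy closes up into finitely many analytic equations defining $\Sigma$ rather than an infinite chain that could in principle cut out only a $C^\infty$ (or worse) set; the abelian structure of the derived algebra is precisely what makes the brackets beyond a certain order collapse, so the argument must make the bookkeeping of those brackets explicit, presumably by working in the frame $\F$ of Definition~\ref{def:metabelian} and using the principal connection form. A secondary technical point is equiregularity: it is needed to ensure the quotient manifold $N$ and the projection $M \to N$ are well-defined and that dimensions of the relevant bracket-generated subspaces do not jump, so one should verify that the analytic variety $\Sigma$ and the fibration are globally consistent along the (compact) image of the minimizer. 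Once the intermediate proposition is in hand, the passage to $C^1$ is comparatively soft, relying on dimension count ($\dim N = r = 2$ forces the constraint variety to be a curve) and on the already-established absence of corners.
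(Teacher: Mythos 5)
Your overall architecture coincides with the paper's: reduce to strictly abnormal geodesics via the Goh conditions, trap the projection to $\R^2$ in an analytic variety, apply the structure theory of one-dimensional real analytic varieties away from a discrete set, and use the non-minimality of corners to get $C^1$ at the remaining points, lifting regularity back to $M$ through the fibration. However, the heart of the matter --- your ``intermediate proposition'', which is precisely Theorem~\ref{thm:forte} of the paper --- is not proved in your proposal: you flag it as the expected obstacle, and your sketch does not contain the idea that makes it work. The Goh condition reads $\la\big((\Phi_u^t)^*Y\big)=0$ for $Y\in\D^2_{\g(t)}$, i.e.\ it pairs the covector with a \emph{time-dependent pullback along the flow}, so a priori it imposes a different condition at each time $t$ rather than confining the curve to one fixed variety. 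The paper removes the pullback by a specific choice of coordinates: metabelianity is equivalent (Theorem~\ref{giallo2}) to the existence of a frame of the triangular form~\eqref{eq:Xi2}, with the coefficients $A_{k,j}$ depending only on $x_1,\dots,x_r$, and then Lemma~\ref{lem:differential_flow} shows $(\Phi_u^t)_*\partial_{x_j}=\partial_{x_j}$ for all $j>r$. Since $[X_h,X_k]$ is a combination of exactly these $\partial_{x_j}$ with coefficients in $x_1,\dots,x_r$ only, the Goh pairing collapses to the time-independent analytic equation $F^{h,k}_\la(\k_\g(t))=0$, which is what defines $V_\la$. Your plan via ``the principal connection form'' and bracket bookkeeping never explains how the flow pullback disappears; without that mechanism (or an equivalent one) the analytic variety $\Sigma$ is not defined and the second half of your argument has nothing to act on. Note also that the structural input needed is this converse direction (metabelian $\Rightarrow$ adapted frame and local principal-bundle structure, Theorem~\ref{giallo2} and the proposition following it), not Theorem~\ref{thm:Gpb}, which goes the other way.

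A second, smaller but genuine, flaw is your case analysis of $\Sigma$: when $\Sigma$ is an open subset of the base you say the constraint is vacuous and ``one falls back on normal-type smoothness''. The curve under consideration is \emph{strictly} abnormal, so it carries no normal covector and inherits nothing from the smoothness of normal extremals; if the defining function vanished identically your argument would simply halt. In the paper the variety is the zero set of the single analytic function $F^{1,2}_\la$ (Corollary~\ref{cor:abnrk2}, using that in rank $2$ every nonconstant abnormal curve satisfies Goh), and the endgame runs through the structure theorem for one-dimensional analytic subvarieties of $\R^2$ together with the Newton--Puiseux parametrization of branches at a singular point, arc-length parametrization, and the no-corner theorem. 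So the soft part of your plan is aligned with the paper, but the degenerate (two-dimensional) case must be excluded or treated, not dismissed by appeal to normal extremals.
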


The proof of Theorem~\ref{thm:main} is given in Section 4. Actually, we will prove a slightly stronger statement: locally, every strictly abnormal minimizer is the lift of a curve whose image is inside a suitable analytic variety, see Theorem~\ref{thm:forte} below. The proof of this stronger statement relies upon a specific choice of coordinates, called exponential coordinates of second type (see Section 2.2 below). These coordinates are built starting from a frame $X_1,\.,X_r$ for the distribution. When the distribution is metabelian, in such system of coordinates, the differential of the flow of controls (of horizontal curves) is the identity on every iterated bracket of $X_1,\.,X_r$. This is our key technical Lemma~\ref{lem:differential_flow}. As a direct consequence of the latter result, the equation given by Goh conditions (in exponential coordinates of second type) provides the defining equation for the desired analytic variety. Once we proved that, locally, every strictly abnormal minimizer is the lift of a curve whose image is inside a suitable analytic variety, we apply this result to distributions of rank-2. A structure theorem for 1-dimensional analytic varieties of $\R^2$ combined with the non-minimality of corners for geodesics provides the result of Theorem~\ref{thm:main}.

We point out that some of our results do not hold when the metabelian assumption is dropped. Indeed, in Section 5, we provide an example of a spiral in $\R^2$ whose image is not contained in any 1-dimensional analytic variety of $\R^2$, and whose lift in a suitable non-metabelian sub-Riemannian manifold is a strictly abnormal curve. The possible minimality of such a curve is still an open problem.

We end this introductory part by spending some words on the regularity result we reached. In the first place, our result is an improvement (for metabelian manifolds) of~\cite{Sus14}, where it was stated that geodesics in analytic sub-Riemannian manifolds are analytic on an open dense set. Secondly, our Theorem~\ref{thm:main} is the third example of a $C^1$-regularity result for geodesics in sub-Riemannian manifolds, after~\cite{BCJPS20,BFPR22}. The classes of manifolds studied in these 3 papers have always rank-2, and have non-empty intersection. However, every of such classes has something peculiar with respect to the other two.

\medskip

{\bf Research funding.} N. Paddeu and E. Le Donne
were partially supported by 
the Swiss National Science Foundation
(grant 200021-204501 `\emph{Regularity of sub-Riemannian geodesics and applications}').

This project has received funding from the European Union’s Horizon 2020 research and innovation programme under the Marie Sk{\l}odowska-Curie grant agreement No 101034255. \includegraphics[width=0.65cm]{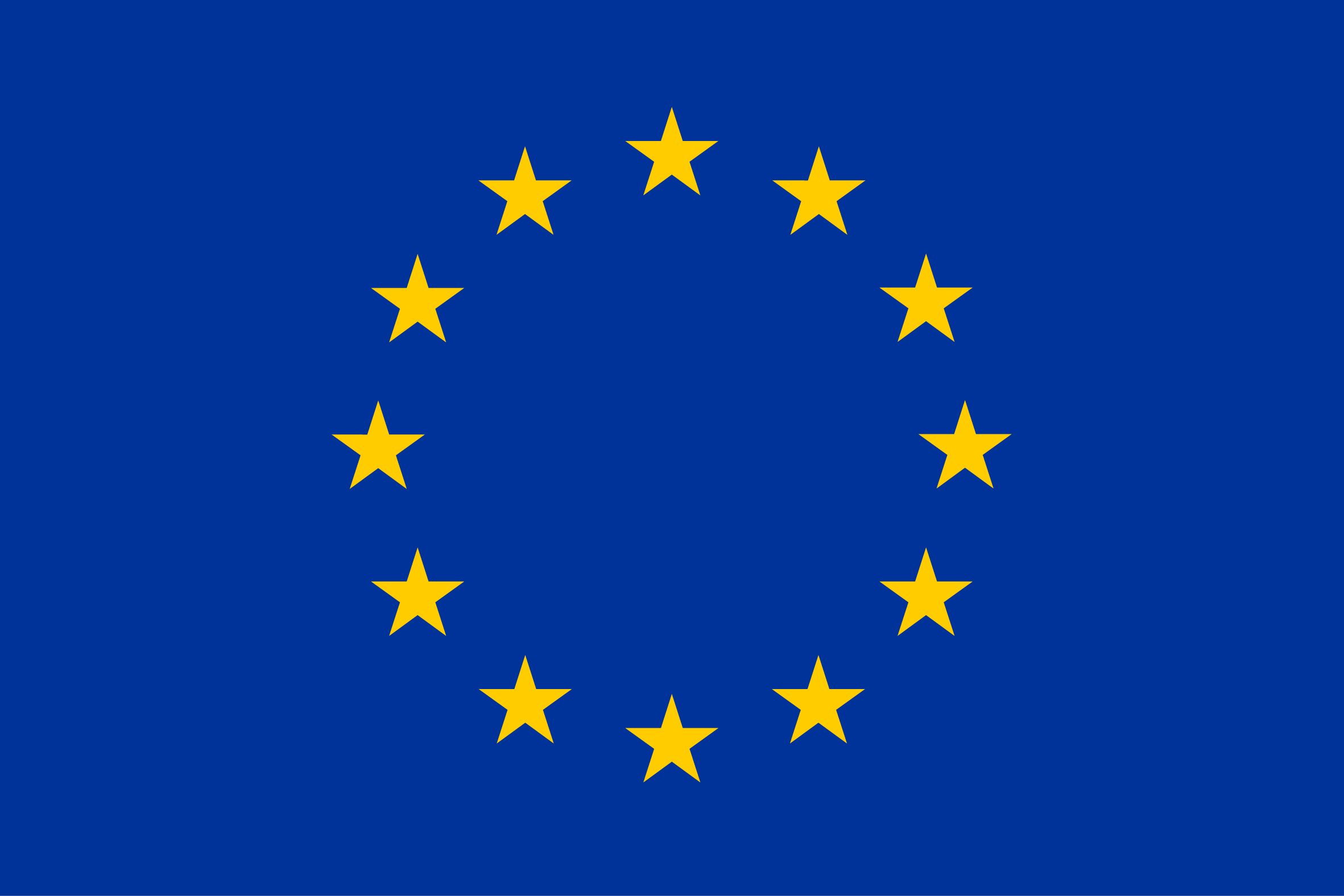}

\section{Preliminary notions}	

In this section we briefly present sub-Riemaniann manifolds. We recall the definitions of normal and abnormal trajectories and of blowup of a curve. Here and hereafter, $I=[a,b]\subset\R$ is an interval.

\begin{definition}
	\label{def:Carnot_group}
Let $M$ be a smooth (resp., analytic) manifold. A smooth (resp., analytic) {\em distribution} $\D$ on $M$ is a smooth (resp., analytic) subbundle of the tangent bundle $TM$. For all $p\in M$, we define by induction on $k\in\N$, $\D^1:=\D$, and
\begin{equation}
	\D^k_p:=\vspan\{[X_1,\.,X_j](p)\mid j\leq k, \, X_i\in\vect(M), \, X_i(q)\in\D_q \text{ for every } q\in M\}.
\end{equation}
A distribution $\D$ is {\em bracket-generating} if, for all $p\in M$, there exists $k$ such that $\D^k_p=T_pM$.

The {\em rank} at $p\in M$ of a distribution $\D$ is the integer $\dim(\D_p)$. We say that a distribution $\D$ has rank $r$ if $r=\dim(\D_p)$, for all $p\in M$. We say that a distribution $\D$ is {\em equiregular} if $\dim(\D^k_p)$ is constant in $p$, for all $k\in\N$.

A {\em sub-Riemannian manifold} is a connected manifold equipped with a bracket-generating distribution $\D$ and with a Riemannian metric $g$ defined on $\D$.
\end{definition} 

Throughout this paper, we consider sub-Riemannian manifolds $(M,\D,g)$ where $M,\D$ are analytic, and $\D$ is equiregular of rank $r$.

\begin{definition}
	Let $(M,\D,g)$ be a sub-Riemannian manifold. A {\em horiziontal curve} $\g:I\to M$ is an absolutely continuous curve such that $\Dot{\gamma}(t)\in \D_{\gamma(t)} \text{ for a.e. } t\in I$.
	
	The {\em Carnot-Carathéodory distance} on $M$ is defined, for all $x,y\in M$, by
	\begin{equation}
		\begin{split}
			d_{cc}(x,y):=\inf \bigg\{ \int_I\sqrt{g(\Dot{\gamma}(t),\Dot{\gamma}(t))}\mr d t \ \bigg|  \ \gamma:I\to M \text{ horizontal }, \gamma(0)=x, \gamma(1)=y\bigg\} .
		\end{split}
	\end{equation}
\end{definition}

The fact that $d_{cc}(x,y)<+\oo$ for every $x,y\in M$ is a direct consequence of the bracket-generating property of $\D$ and it is well known as Chow-Rashevskii Theorem (see~\cite{ABB20}).

\subsection{Length-minimizing curves in sub-Riemannian manifolds}

In this paper we are interested in the study of length-minimizing curves in sub-Riemannian manifolds. The first tool we will use is the Pontryagin Maximum Principle (PMP), which gives necessary conditions for curves to be geodesics. 

\begin{definition}
	Let $M$ be a sub-Riemannian manifold. Let $\gamma:I\to M$ be a horizontal curve. We say that $\g$ is {\em length-minimizing} (or a {\em length minimizer}) if, for every $t,s\in I$, $s<t$, we have $\l(\g|_{[s,t]})=d_{cc}(\g(t),\g(s))$. We say that $\g$ is a {\em geodesic} if, for every $t,s\in I$, we have $d_{cc}(\g(t),\g(s))=|t-s|$.
\end{definition}

\begin{remark}
	\label{rem:geodpbal}
	Notice that geodesics are parametrized by arc-length, and arc-length reparametrization of length minimizers are geodesics. We are only interested on the regularity of geodesics since smoothness can be destroyed by a bad reparametrization.
\end{remark}

\begin{definition}
	\label{def:flow_u}
	Let $M$ be a sub-Riemannian manifold with distribution $\D$. Fix $X_1,\.,X_k\in\mr{Vec}(M)$ such that $\D_p=\mr{span}\{X_1(p),\.,X_k(p)\}$ for all $p\in M$. We call \emph{controls} the elements of $L^2(I,\R^k)$. For every control $u$, we define the non-autonomous vector field
	\begin{equation}
		X_u(t,x):=\sum_{i=1}^{k}u_i(t)X_i(x).
	\end{equation} 	
	The {\em flow} of a control $u$ is the map $\Phi_u:U\subset\R\x M\to M$ solving the following ODE
	\begin{equation}
		\d_t\Phi_u(t,x)=X_u(t,\Phi(t,x)), \text{ for all } x\in M, \text{ and for a.e. } t\in U\cap (\R\times \{x\}),
	\end{equation}
	where $U\cap (\R\times \{x\})$ is connected and maximal. For a horizontal curve $\g:I\to M$ we say that $u$ is a {\em control associated to} $\g$ if 
	\begin{equation}
		\g(t)=\Phi_u(t,\g(0)).
	\end{equation}
\end{definition}

In the following, we denote the flow map putting the time $t$ as an apex, i.e., $\Phi_u^t(x):=\Phi_u(t,x)$. We also emphasize that if the vector fields $X_1,\.,X_k$ are a (local) frame for $\D$, then there is a unique control associated to each horizontal curve.

\begin{proposition}[PMP, {\cite[Theorem 3.59]{ABB20}}] 
\label{prop:PMP}
Let $(M,\D,g)$ be a sub-Riemannian manifold. Fix $X_1,...,X_k\in \Vecc (M)$ such that $\D_p=\Span(\{X_1(p),...,X_k(p)\})$ for every $p\in M$. Let $\gamma:I\to M$ be a geodesic, and let $u$ be a control of $\g$ with respect to $X_1,\.,X_k$. Then there exists a covector $\lambda\in T^*_{\gamma(0)}M$ such that it holds either
\begin{equation}
	\label{eq:PMP_normal}
	g(X,\gg(t))=\la((\Phi_u^t)^*X), \text{ for all } X\in\D_{\g(t)}, \text{ and for all } t\in I,
\end{equation}
or $\la\neq0$ and
\begin{equation}
	\label{eq:PMP_abnormal}
	\la\big((\Phi_u^t)^*X\big)=0, \text{ for all } X\in\D_{\g(t)}, \text{ and for all } t\in I.
\end{equation}
\end{proposition}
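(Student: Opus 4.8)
The statement is the classical Pontryagin Maximum Principle, and the plan is to prove it by a first-order analysis of the end-point map combined with the Lagrange multiplier rule in Banach spaces, as in \cite[Section 8]{ABB20}. After translating the interval I may assume $I=[0,T]$, and I set $x_0=\g(0)$. First I would reduce minimality to energy minimization. Since $\g$ is a geodesic it is parametrized by arc length (Remark~\ref{rem:geodpbal}), hence has constant speed; by Cauchy--Schwarz a constant-speed length minimizer with fixed endpoints minimizes the energy $J(v)=\frac12\int_I\sum_i v_i(t)^2\,\mr dt$ among all controls $v$ (with respect to a $g$-orthonormal frame $X_1,\.,X_k$) subject to $\Phi_v^T(x_0)=\g(T)$. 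Passing to an orthonormal frame is harmless; otherwise one carries the Gram matrix of $g$ throughout.

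Next I would introduce the end-point map. On the open set of controls whose flow exists up to time $T$, set $E(v):=\Phi_v^T(x_0)$. This map is smooth, and its differential at $u$ is given by the variation-of-constants formula
\[
	d_uE(w)=d\Phi_u^T\int_0^T(\Phi_u^t)^*X_w(t)\,\mr dt,
\]
where $X_w(t):=\sum_i w_i(t)X_i(\g(t))$, the integral is a vector of $T_{x_0}M$, and $(\Phi_u^t)^*=(d\Phi_u^t)^{-1}$ pulls back to $x_0$. Establishing smoothness of $E$ on $L^2(I,\R^k)$ and this exact first-variation formula (via Duhamel applied to the linearized flow equation) is the first technical point.

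Then I would split according to whether $d_uE$ is surjective. If $d_uE$ is not surjective, Hahn--Banach gives a nonzero $\mu\in T^*_{\g(T)}M$ annihilating its image; I set $\la:=\mu\circ d\Phi_u^T\in T^*_{x_0}M$, which is nonzero since $d\Phi_u^T$ is invertible. Annihilation reads $\int_0^T\sum_i w_i(t)\,\la\big((\Phi_u^t)^*X_i(\g(t))\big)\,\mr dt=0$ for every $w\in L^2(I,\R^k)$, whence $\la\big((\Phi_u^t)^*X_i(\g(t))\big)=0$ for a.e.\ $t$ and each $i$; this is the abnormal alternative~\eqref{eq:PMP_abnormal}. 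If instead $d_uE$ is surjective, then since $u$ minimizes $J$ under the constraint $E(\cdot)=\g(T)$, the Lagrange multiplier rule yields $\mu\in T^*_{\g(T)}M$ with $d_uJ=\mu\circ d_uE$. Putting again $\la:=\mu\circ d\Phi_u^T$ and using $d_uJ(w)=\int_0^T\sum_i u_i(t)w_i(t)\,\mr dt=\int_0^T g(\gg(t),X_w(t))\,\mr dt$, comparison of the two integral expressions gives $g(X_i(\g(t)),\gg(t))=\la\big((\Phi_u^t)^*X_i(\g(t))\big)$ for a.e.\ $t$, i.e.\ the normal alternative~\eqref{eq:PMP_normal}.

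Finally I would upgrade ``a.e.\ $t$'' to ``all $t$'': the map $t\mapsto\la\big((\Phi_u^t)^*X_i(\g(t))\big)$ is continuous, because the flow of an $L^2$ non-autonomous vector field is absolutely continuous in $t$ and $X_i,\g$ are continuous. Hence in the abnormal case the conditions hold for every $t\in I$, and in the normal case the right-hand side provides a continuous representative of $g(\gg,X_i)$, yielding the stated equality for all $t$ (and, a posteriori, the smoothness of the normal geodesic). The main obstacle I expect is the functional-analytic groundwork, namely the differentiability of $E$ on the infinite-dimensional space $L^2(I,\R^k)$, the precise first-variation formula, and the uniform treatment of the surjective and non-surjective cases via the multiplier/annihilator dichotomy; by contrast, obtaining the conditions for all $t\in I$ rather than only at the terminal time is automatic, since annihilation of the image of $d_uE$ produces pointwise-in-$t$ constraints.
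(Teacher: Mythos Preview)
The paper does not prove this proposition at all: it is stated with a citation to \cite[Theorem 3.59]{ABB20} and used as a black box. Your sketch is the standard first-variation/Lagrange-multiplier argument from that reference, so there is nothing to compare against in the paper itself.
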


Horizontal curves satisfying at least one between~\eqref{eq:PMP_normal} and~\eqref{eq:PMP_abnormal} are called \emph{extremals}. The extremals satysfing~\eqref{eq:PMP_normal} are called \emph{normal}, the ones for which~\eqref{eq:PMP_abnormal} holds are called \emph{abnormal}. Abnormal curves for which~\eqref{eq:PMP_normal} does not hold for any $\la\neq0$ are called \emph{strictly abnormal extremals}. It is known that normal extremals are locally length-minimizing and smooth (and analytic if $M$, $\D$, and $g$ are analytic). Thus, if a non-smooth length-minimizing curve exists, it must be a strictly abnormal extremal.

\begin{proposition}[Goh conditions, {\cite[Theorem 12.13]{ABB20}}]
	\label{prop:Goh}
	In the setting of Proposition~\ref{prop:PMP}, if $\g$ is a strictly abnormal geodesic, we have
	\begin{equation}
		\label{eq:propGoh}
		\la\big((\Phi_u^t)^*Y\big)=0, \text{ for all } Y\in\D^2_{\g(t)}, \text{ and for all } t\in I.
	\end{equation}
\end{proposition}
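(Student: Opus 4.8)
Since $\D^2_{\g(t)}$ is spanned by $\D_{\g(t)}$ together with the values of the first-order brackets $[X_i,X_j]$, and since~\eqref{eq:PMP_abnormal} already provides $\la\big((\Phi_u^t)^*X\big)=0$ for every $X\in\D_{\g(t)}$, the assertion~\eqref{eq:propGoh} is equivalent to the vanishing, for a.e.\ $t\in I$, of the functions
\begin{equation}
	h_{ij}(t):=\la\big((\Phi_u^t)^*[X_i,X_j](\g(t))\big),\qquad i,j\in\{1,\.,k\},
\end{equation}
where $h_{ji}=-h_{ij}$. Writing also $h_i(t):=\la\big((\Phi_u^t)^*X_i(\g(t))\big)$, so that~\eqref{eq:PMP_abnormal} reads $h_i\equiv 0$, I would first record why a purely first-order argument cannot suffice. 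Differentiating along the flow gives $\dot h_i(t)=\sum_j u_j(t)\,h_{ji}(t)$, so the condition $h_i\equiv 0$ yields only the single scalar relation $\sum_j u_j(t)\,h_{ji}(t)=0$ for each $i$; this is strictly weaker than the vanishing of every $h_{ij}$. The missing information is genuinely of second order and must be extracted from the minimality of $\g$.

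The plan is to run the classical second-order analysis of the end-point map. Fixing $x_0:=\g(a)$, I would work with the end-point map $E(v):=\Phi_v^b(x_0)$ defined on controls $v\in L^2(I,\R^k)$. Abnormality of $\g$ says precisely that $u$ is a critical point of $E$ and that $\la$ annihilates $\IM d_uE$; equivalently, the scalar function $\la\circ E$ has a critical point at $u$. The minimality of the strictly abnormal curve $\g$ forces its end-point to lie on the boundary of the attainable set from $x_0$, which produces the second-order necessary condition that the Hessian at $u$ of $\la\circ E$ be semidefinite, of one fixed sign, on $\ker d_uE$.

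The heart of the matter is to bring this Hessian into Goh normal form. Expanding the flow $\Phi_u^t$ to second order and writing a control variation $v$ together with its primitive $w(t):=\int_a^t v(s)\,\mr ds$, a direct computation expresses the Hessian as a double integral over the simplex $\{s<t\}$ of $\la$ applied to iterated brackets; using $h_i\equiv 0$ to annihilate all first-order and diagonal contributions, the leading surviving term is exactly
\begin{equation}
	\la\big(d_u^2E(v,v)\big)=\int_I\sum_{i<j}\big(w_i v_j-w_j v_i\big)(t)\,h_{ij}(t)\,\mr dt+\mathcal R(w),
\end{equation}
where the remainder $\mathcal R$ depends only on the primitive $w$ and not on $v$ itself. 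The displayed leading term is the skew-symmetric pairing of $w$ against $v=\dot w$ weighted by the brackets $h_{ij}$, and this is the structural feature on which Goh's condition hinges. I expect the honest derivation of this normal form to be the principal obstacle: one must carry out the second-order expansion of $\Phi_u^t$, perform the integrations by parts that convert the $X_i$-contributions into the first-bracket weights $h_{ij}$, and verify that every first-order and diagonal term cancels by virtue of $h_i\equiv 0$, so that $\mathcal R$ genuinely involves $w$ alone.

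The final step is Goh's loop argument. Suppose, for contradiction, that $h_{ij}(\tau)\neq 0$ for some pair $i<j$ and some Lebesgue point $\tau$ of $h_{ij}$. On a shrinking interval $[\tau,\tau+\delta]$ I would concentrate a family of variations $v$ whose primitive $w$ traces a small closed loop in the $(X_i,X_j)$-plane and vanishes outside the interval; for such a loop the leading term equals $2 h_{ij}(\tau)$ times the signed area enclosed, up to an error that is $o(1)$ relative to the area as $\delta\to 0$, while $\mathcal R(w)$ is likewise negligible since it scales with the amplitude of $w$ over the small interval. Reversing the orientation of the loop reverses the sign of the area, hence of $\la\big(d_u^2E(v,v)\big)$; after a standard finite-dimensional correction of $v$ so as to land in $\ker d_uE$ (which does not affect the leading term), this contradicts the fixed-sign semidefiniteness of the Hessian. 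Therefore $h_{ij}\equiv 0$ for every pair $i<j$, which is exactly~\eqref{eq:propGoh}.
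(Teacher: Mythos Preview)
The paper does not prove Proposition~\ref{prop:Goh}; it is quoted from \cite[Theorem~12.13]{ABB20} and used as a black box. So there is no in-paper proof to compare your sketch against. That said, your outline follows the classical Agrachev--Sarychev second-order analysis, which is indeed the approach of \cite{ABB20,AgrSac}; the derivation of the Hessian normal form and the ``loop'' variation detecting $h_{ij}$ via signed area are the right ingredients.

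Two points in your sketch would not survive a careful write-up. First, the sentence ``minimality forces the end-point to lie on the boundary of the attainable set from $x_0$'' is false as stated: by the bracket-generating hypothesis the attainable set is all of $M$, so $\g(b)$ is always interior. What minimality gives is that $u$ is not a point of local openness for the \emph{extended} end-point map $\wt E=(J,E)$ (energy plus end-point); the covector lives in $\R\times T^*_{\g(b)}M$, and ``strictly abnormal'' means its $\R$-component is zero, recovering the $\la$ of the statement.

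Second, the second-order necessary condition is not that $\la\circ d_u^2E$ be semidefinite on $\ker d_uE$; the correct conclusion from the second-order open mapping lemma is only that its negative (equivalently positive) Morse index is bounded by the corank of $d_u\wt E$. Your single loop shows the Hessian takes both signs, which contradicts semidefiniteness but not a finite-index bound. The standard repair is to place independent copies of the loop on arbitrarily many pairwise disjoint subintervals clustering at a Lebesgue point $\tau$ with $h_{ij}(\tau)\neq0$; these give linearly independent variations on which the Hessian is negative (and, reversing orientation, positive), forcing infinite index in both directions and hence local openness of $\wt E$, which contradicts minimality. With these two corrections your outline matches the proof in \cite{ABB20}.
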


\begin{remark}
	\label{rem:pmp_goh}
	We stress that Equations~\eqref{eq:PMP_abnormal} and~\eqref{eq:propGoh} do not depend on the metric $g$. Moreover, if $\D$ has rank-2, then every non-constant abnormal curve satisfies Goh conditions~\eqref{eq:propGoh} (see~\cite[Lemma 12.28]{ABB20}).
\end{remark}

\subsection{Exponential coordinates of the second kind} In this section we introduce in $M$ exponential coordinates of the second type centered at a point $p \in M$.

\begin{definition}
	\label{def:stratbas}
	Let  $M$ be a smooth manifold and let $\D\subset TM$ be a smooth equiregular distribution of rank $r$. Let $p\in M$ and let $X_1,\.,X_n\in\Vecc(U)$ be a local frame of $TM$ in a neighborhood $U$ of $p$. We say that $X_1,\.,X_n$ is a stratified basis at $p$ if 
	\begin{itemize}
		\item[i)] $X_1,\.,X_r$ is a local frame for $\D$ in $U$;
		\item[ii)] for every $i>r$, the vector field $X_i$ is an iterated commutator of $X_1,\.,X_r$.
	\end{itemize}
	If $X_1,\.,X_n$ is a stratified basis of $M$ at $p$, we define the weight of $X_i$, $i=1,\.,n$, setting
	\begin{equation}
		w_i:=\min\{k\in\N\,|\,\exists i_1,\.,i_k\in\{1,\.,r\}, X_i=[X_{i_1},\.,X_{i_k}]\}.
	\end{equation}
\end{definition}

\begin{definition}
	\label{def:exp_coord_2}
	Let  $M$ be a smooth manifold and let $\D\subset TM$ be a smooth equiregular distribution of rank $r$. Le $p\in M$, and let $X_1,\.,X_n$ be a stratified basis in a neighborhood $U$ of $p$. For a point $q\in U$, we call {\em exponential coordinates of the second type} associated to $X_1,\.,X_n$ the unique $n$-tuple $x=(x_1,\.,x_n)\in\R^n$ such that
	\begin{equation} \label{23}
		q = \Phi^{x_1}_{ X_1} \circ\.\circ\Phi^{x_n}_{X_n} (p),
	\end{equation}
	Above $\Phi_X^t$ denotes the flow of the vector field $X$ for time $t$.
\end{definition}

Since our problem has a local nature, we assume, without loss of generality, that $M=U=\R^n$ and $p=0$, using the identification given by the exponential coordinates of the second type. We also assume that the vector fields of the stratified basis that we consider are complete.

The following theorem is proved in~\cite{hermes} in the case of general rank. For us it will be important only the statement for rank-2 equiregular distributions, which is explicitly proved in~\cite{MS21}.

\begin{theorem}
	\label{giallo}
	Let  $M$ be an analytic manifold, with $0\in M=\R^n$, and let $\D\subset TM$ be an analytic bracket-generating equiregular distribution of rank $r$. Let $X_1,\.,X_n$ be a stratified basis at $0$. 
	In exponential coordinates of the second type (see Definition~\ref{def:exp_coord_2}) there exist analytic functions $A_{k,j}:\R^n\to\R$, $k=2,\.,r$, $j=r+1,\.,n$, such that the  vector fields $X_1,\.,X_r$ have the form
	\begin{equation} 
		\label{eq:campihermes}
		\begin{split}
			&X_1(x)=\partial_{x_1}, \\
			&X_k(x)=\partial_{x_k}+\sum_{j=r+1}^n{A_{k,j}(x)\partial_{x_j}},
		\end{split}
	\end{equation}
	for all $2\leq k\leq r$, and $x\in \R^n$.
\end{theorem}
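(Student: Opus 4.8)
The plan is to work directly with the defining relation~\eqref{23} for exponential coordinates of the second type and differentiate it appropriately. The key observation is that the change of coordinates is built adapted to the frame $X_1,\.,X_n$, so the structural form~\eqref{eq:campihermes} should fall out of how the flows $\Phi^{x_1}_{X_1}\circ\.\circ\Phi^{x_n}_{X_n}$ interact with each individual $X_k$.

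First I would set up the diffeomorphism $\Psi\colon\R^n\to M$ defined by $\Psi(x)=\Phi^{x_1}_{X_1}\circ\.\circ\Phi^{x_n}_{X_n}(0)$, so that the exponential coordinates are exactly $\Psi^{-1}$, and express each $X_k$ in these coordinates as $(\Psi^{-1})_*X_k$. For $X_1$ the claim $X_1=\d_{x_1}$ is immediate: since $X_1$ is the first generator and $\Psi(x_1+t,x_2,\.,x_n)=\Phi^t_{X_1}(\Psi(x))$, the flow of $X_1$ in these coordinates is translation in the first variable, so $(\Psi^{-1})_*X_1=\d_{x_1}$. For a general $k$ with $2\leq k\leq r$, I would write $(\Psi^{-1})_*X_k(x)=\sum_{j=1}^n a_{k,j}(x)\,\d_{x_j}$ and show that the first $r$ components are $a_{k,j}=\de_{kj}$, while the higher components $a_{k,j}$ for $j>r$ are the analytic functions $A_{k,j}$.

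The heart of the argument is the vanishing of the components $a_{k,j}$ for $j\leq r$, $j\neq k$, and the normalization $a_{k,k}\equiv 1$. Here I would exploit the nested structure of the flows: evaluating along the coordinate hyperplane where $x_{k+1}=\.=x_n=0$ (or more precisely comparing the pushforward of $X_k$ with the coordinate vector field $\d_{x_k}$ on the appropriate slice), one uses that $\Phi^{x_k}_{X_k}$ appears in position $k$ in the composition~\eqref{23}, so differentiating $\Psi$ in the $x_k$-direction produces $(\Phi^{x_1}_{X_1}\circ\.\circ\Phi^{x_{k-1}}_{X_{k-1}})_* X_k$ evaluated at the appropriate point. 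One then must show that conjugating $X_k$ by the flows of $X_1,\.,X_{k-1}$ does not introduce components along $\d_{x_1},\.,\d_{x_r}$ other than the $\d_{x_k}$ one. This is where analyticity and the equiregular, stratified structure enter: the bracket $[X_i,X_k]$ for $i<k\leq r$ lies in $\D^2$, hence (in an equiregular distribution, using that $X_{r+1},\.,X_n$ are chosen as iterated brackets spanning the higher layers) its pushforward can only contribute to the $\d_{x_j}$ with $j>r$; iterating the Baker--Campbell--Hausdorff-type expansion for $(\Phi^{x_i}_{X_i})_*X_k = X_k + \sum_{m\ge 1}\frac{x_i^m}{m!}(\ad_{X_i})^m X_k$ keeps all correction terms inside $\Span\{\d_{x_{r+1}},\.,\d_{x_n}\}$. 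The analyticity of the resulting coefficients $A_{k,j}$ follows because flows of analytic complete vector fields are analytic and the series converges. I expect the main obstacle to be precisely the bookkeeping in this last step: showing cleanly that all the iterated brackets produced when commuting $X_k$ past $X_1,\.,X_{k-1}$ land in the span of $\d_{x_{r+1}},\.,\d_{x_n}$, which requires carefully invoking the equiregularity so that $\D^2,\D^3,\dots$ are honest subbundles complemented by coordinate directions, rather than just pointwise spans. Since the rank-$r$ general case is attributed to~\cite{hermes} and only the rank-$2$ case (explicitly in~\cite{MS21}) is needed here, for $r=2$ the argument simplifies: there is only $X_2$ to handle, $a_{2,1}\equiv 0$ and $a_{2,2}\equiv 1$ follow from the single conjugation by $\Phi^{x_1}_{X_1}$ together with $[X_1,X_2]\in\D^2$, and the remaining components $A_{2,j}$, $j\geq 3$, are analytic for the reasons above.
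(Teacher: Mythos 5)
The first half of your plan is fine: $X_1=\partial_{x_1}$ follows exactly as you say, and the identity $\Psi_*\partial_{x_k}\big|_x=\big((\Phi^{x_1}_{X_1})_*\cdots(\Phi^{x_{k-1}}_{X_{k-1}})_*X_k\big)(\Psi(x))$ is the right starting point. The gap is precisely at the step you yourself flag as the ``main obstacle'', and it is not a bookkeeping issue: from $X_k-(\Phi^{x_1}_{X_1}\circ\cdots\circ\Phi^{x_{k-1}}_{X_{k-1}})_*X_k$ being a series of iterated brackets of length $\geq 2$ you cannot conclude that its coordinate expression lies in $\mathrm{span}\{\partial_{x_{r+1}},\dots,\partial_{x_n}\}$. ``Lies in $\D^2$'' excludes nothing, since $\D\subset\D^2$; ``lies in the span of the bracket fields $X_{r+1},\dots,X_n$'' is not the same as ``has components only along $\partial_{x_{r+1}},\dots,\partial_{x_n}$'', because in these coordinates the fields $X_j$, $j>r$, are not coordinate vector fields; and iterated brackets of length $\geq 3$ can have nonzero components along $X_1,\dots,X_r$ themselves. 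Equiregularity only gives constant dimensions of the $\D^k$; the identification of the higher layers with the coordinate directions $\partial_{x_{r+1}},\dots,\partial_{x_n}$ is essentially equivalent to the conclusion you are trying to prove, so the argument as sketched is circular.

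In fact no bookkeeping can close this gap, because the statement for an \emph{arbitrary} stratified basis is false, so additional structure must be used somewhere. On $\R^3$ with coordinates $(v_2,v_3,s)$ take the analytic frame $X_1=\partial_s$, $X_2=\cosh (s)\,\partial_{v_2}+\sinh (s)\,\partial_{v_3}$ (left-invariant fields on the solvable group $\R^2\rtimes\R$ with $[e_1,e_2]=e_3$, $[e_1,e_3]=e_2$, $[e_2,e_3]=0$); then $X_3:=[X_1,X_2]=\sinh (s)\,\partial_{v_2}+\cosh( s)\,\partial_{v_3}$, the distribution $\D=\mathrm{span}\{X_1,X_2\}$ is bracket-generating, equiregular of rank $2$, and $X_1,X_2,X_3$ is a stratified basis. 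Since the flows of $X_2,X_3$ preserve $\{s=0\}$, where they equal $\partial_{v_2},\partial_{v_3}$, one computes $\Phi^{x_1}_{X_1}\circ\Phi^{x_2}_{X_2}\circ\Phi^{x_3}_{X_3}(0)=(x_2,x_3,x_1)$, so in the coordinates of~\eqref{23} one gets $X_2=\cosh(x_1)\partial_{x_2}+\sinh(x_1)\partial_{x_3}$, not of the form~\eqref{eq:campihermes}; indeed no coordinates at all can produce that form for this frame, since~\eqref{eq:campihermes} forces every iterated bracket of length $\geq2$ to have vanishing $\partial_{x_1},\partial_{x_2}$ components, whereas here $[X_1,[X_1,X_2]]=X_2$. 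So the triangular form requires either a special choice of frame (as in the metabelian statement of Theorem~\ref{giallo2}: in this example the rescaled field $\mathrm{sech}(s)X_2$ does acquire the good form $\partial_{x_2}+\tanh(x_1)\partial_{x_3}$) or the extra hypotheses under which the cited references actually work; note the paper itself gives no proof of Theorem~\ref{giallo}, only the references to Hermes and to the rank-$2$ case, so your argument cannot be salvaged by matching it to an argument in the text. Your rank-$2$ simplification inherits the same flaw: $[X_1,X_2]\in\D^2$ does not give $a_{2,1}\equiv0$ and $a_{2,2}\equiv1$ away from the slice $\{x_1=0\}$.
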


\section{Metabelian distributions}

In this section, we are going to use the notions of $G$-principal bundles and Ehresmann connections. We briefly recall that if $G$ is a Lie group and $\pi:M\to B$ is a $G$-principal bundle, then a distribution $\D\subset TM$ is an Ehresmann connection if
\begin{align}
	\tag{E1}\label{eq:E1} &\text{$\D$ is invariant under the action of $G$;}\\
	\tag{E2}\label{eq:E2} &\text{for every $p\in M$, we have $\D_p\oplus \ker d_p\pi=T_pM$}.
\end{align}
For more details concerning these topics, we refer the reader to~\cite[Chapter 11]{Mon02}.

We denote by $\I_m^k:=\{(i_1,\.,i_k)|1\leq i_j\leq m, \, j=1,\.,k\}$ the set of multi-indices of length $k$ and variables less or equal than $m$.

\begin{definition}
	Let $M$ be a smooth manifold and let $X_1,\.,X_m\in\Vecc(M)$. For $k\in\N$ and for a multi-index $J=(j_1,\dots,j_k)\in\I_m^k$, we define the {\em iterated bracket} associated to $J$ as
	\begin{equation}
		X_J:=[X_{j_1},[ \dots,[  X_{j_{k-1} }, X_{j_k}]\cdots ]].
	\end{equation} 
\end{definition}

\begin{definition}
	\label{def:metabelian}
	Let $M$ be a smooth manifold with a smooth distribution $\D\subset TM$. We say that the distribution $\D$ is {\em metabelian} if for all $p\in M$ there exists a local frame $X_1,\.,X_r$ for $\D$ in a neighborhood $U$ of $p$ such that
	\begin{equation}
		\label{eq:metabelian}
		[X_I,X_J]=0,
	\end{equation}
	for every $I\in \I_r^{k_I}$ and $J\in \I_r^{k_J}$ with $k_I,k_J\geq2$. 
\end{definition}

The following result is the main theorem of this section: it shows that metabelian distributions naturally arise as Ehresemann connections on a specific class of principal bundles.

\begin{theorem}
	\label{thm:Gpb}
	Let $M$ be a $G$-principal bundle and let $\D$ be an Ehresemann connection. If $G$ is abelian, then $\D$ is metabelian.
\end{theorem}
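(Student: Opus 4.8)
The plan is to work locally over a trivializing neighborhood of the bundle and exploit the fact that an Ehresmann connection on a $G$-principal bundle is, up to the $G$-action, determined by a $\mathfrak{g}$-valued one-form (the connection form), whose curvature encodes the brackets of horizontal lifts. First I would fix $p \in M$ and choose a local trivialization $\pi^{-1}(V) \cong V \times G$ of the bundle over an open set $V \subset B$ containing $\pi(p)$, with coordinates $(b, \sigma)$. Pick a local frame $Y_1, \dots, Y_r$ of $TV$ on $V$ (here $r = \dim B = \operatorname{rank}\D$ by \eqref{eq:E2}); since $G$ is abelian, fix once and for all a basis of left-invariant (hence bi-invariant, hence commuting) vector fields $Z_1, \dots, Z_s$ on $G$ spanning the vertical distribution $\ker d\pi$. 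Then, by \eqref{eq:E1}–\eqref{eq:E2}, the horizontal lift $X_i$ of $Y_i$ has the form $X_i = Y_i + \sum_{\alpha=1}^{s} a_i^\alpha(b,\sigma)\, Z_\alpha$ where, by $G$-invariance of $\D$ and the fact that the $Z_\alpha$ are left-invariant, the coefficients $a_i^\alpha$ depend only on $b$, not on $\sigma$. These $X_1, \dots, X_r$ are the candidate metabelian frame.

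Next I would compute the iterated brackets. The key structural observation is: a bracket $[X_I, X_J]$ with $|I|, |J| \geq 2$ is vertical, and moreover its vertical coefficients depend only on $b$. Concretely, I would first show by induction on $|I| \geq 2$ that any iterated bracket $X_I$ of the $X_i$'s with $|I| \geq 2$ is of the form $\sum_\alpha c_I^\alpha(b)\, Z_\alpha$ — i.e. it is vertical, with $b$-dependent (not $\sigma$-dependent) coefficients. The base case $|I| = 2$: $[X_i, X_j]$; its horizontal part $[Y_i, Y_j]_{\text{on }V}$ would a priori survive, but $\D$ being a connection means the horizontal distribution need not be involutive, yet $[X_i, X_j] = \widetilde{[Y_i,Y_j]} + (\text{vertical curvature term})$ where $\widetilde{(\cdot)}$ is the horizontal lift. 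Hmm — here I need to be careful: $[X_i,X_j]$ need not itself be vertical. The right statement is that the bracket of a horizontal lift with a \emph{vertical} field behaves well. So instead I would organize the induction around: (a) $[Z_\alpha, X_i] = \sum_\beta (\text{something}) $ — using $[Z_\alpha, Z_\beta] = 0$ (abelianness of $G$) and that $a_i^\beta$ is $\sigma$-independent, one gets $[Z_\alpha, X_i] = 0$; and (b) $[Z_\alpha, Z_\beta] = 0$. Granting (a) and (b), any bracket of two vertical fields with $b$-independent... — more precisely, any field of the form $\sum_\alpha c^\alpha(b) Z_\alpha$ brackets with a horizontal lift $X_i$ to give $\sum_\alpha (Y_i c^\alpha)(b)\, Z_\alpha + \sum_\alpha c^\alpha(b)[Z_\alpha, X_i] = \sum_\alpha (Y_i c^\alpha)(b)\, Z_\alpha$, again of the same form; and two fields of that form bracket to $0$ by abelianness and $\sigma$-independence. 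Then the induction closes: writing $X_I = [X_{i_1}, X_{I'}]$ with $|I'| \geq 2$, by inductive hypothesis $X_{I'} = \sum_\alpha c_{I'}^\alpha(b) Z_\alpha$, so $X_I = \sum_\alpha (Y_{i_1} c_{I'}^\alpha)(b)\, Z_\alpha$ — vertical, $\sigma$-independent. Finally $[X_I, X_J] = 0$ for $|I|, |J| \geq 2$ since both are of the stated form.

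The step I expect to be the main obstacle is establishing $[Z_\alpha, X_i] = 0$ rigorously, and more generally pinning down \emph{why} the connection coefficients $a_i^\alpha$ are independent of the fiber coordinate $\sigma$. This is exactly where \eqref{eq:E1} (invariance under $G$) and the abelianness of $G$ both get used: $G$-invariance of $\D$ says that right translation $R_g$ maps $\D_p$ to $\D_{pg}$, which in the trivialization $(b,\sigma) \mapsto (b, \sigma g)$ (using abelianness so left = right translation essentially, and left-invariant $Z_\alpha$ are $R_g$-related to themselves) forces $a_i^\alpha(b, \sigma g) = a_i^\alpha(b,\sigma)$ for all $g$, hence $a_i^\alpha$ is constant along fibers. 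Then $Z_\alpha a_i^\alpha = 0$ gives $[Z_\alpha, X_i] = [Z_\alpha, Y_i] + \sum_\beta [Z_\alpha, a_i^\beta Z_\beta] = 0 + \sum_\beta (Z_\alpha a_i^\beta) Z_\beta + \sum_\beta a_i^\beta [Z_\alpha, Z_\beta] = 0$, using $[Z_\alpha, Y_i] = 0$ (disjoint coordinate blocks) and $[Z_\alpha, Z_\beta] = 0$ (abelianness). Once this lemma is in hand the rest is the bookkeeping induction sketched above, and one checks that $X_1, \dots, X_r$ extends to a stratified basis to conclude $\D$ is metabelian in the sense of Definition~\ref{def:metabelian}. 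An alternative, more invariant route avoiding coordinates would be to phrase everything via the curvature $2$-form $\Omega$ and the structure equation $\Omega = d\omega + \tfrac12[\omega,\omega] = d\omega$ (the bracket term vanishing since $\mathfrak g$ is abelian), together with the second Bianchi identity $d\Omega = 0$, to show all higher brackets of horizontal lifts land in a fixed abelian subalgebra of vertical fields; but the coordinate computation is more elementary and self-contained, so that is the approach I would write up.
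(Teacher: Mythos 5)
Your strategy is essentially the paper's: trivialize the bundle, write a frame of $\D$ as base fields plus vertical parts whose $\Lie(G)$-valued coefficients are fiber-independent, and use abelianness of $G$ twice. But as written there is a genuine gap, and it sits exactly at the point you flagged and then did not repair: the base case of your induction. With an \emph{arbitrary} frame $Y_1,\dots,Y_r$ of $TV$, granting your (correct) facts that the $a_i^\alpha$ are $\sigma$-independent, that $[Z_\alpha,Z_\beta]=0$ and $[Z_\alpha,Y_i]=0$, one gets
\begin{equation}
	[X_i,X_j]=[Y_i,Y_j]+\sum_{\beta}\big(Y_i a_j^\beta-Y_j a_i^\beta\big)Z_\beta ,
\end{equation}
and the term $[Y_i,Y_j]$ is in general a nonzero field tangent to the base factor. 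So the statement you induct on, namely that $X_I$ has the form $\sum_\alpha c_I^\alpha(b)Z_\alpha$ for every $|I|\ge 2$, is false at $|I|=2$ for a generic choice of the $Y_i$; your reorganized induction, which writes $X_I=[X_{i_1},X_{I'}]$ with $|I'|\ge 2$ and invokes the inductive hypothesis on $X_{I'}$, only covers $|I|\ge 3$ and never establishes the case $|I'|=2$ it rests on. Since property \eqref{eq:metabelian} is frame-dependent (Definition~\ref{def:metabelian} asks for the existence of \emph{some} frame), producing ``the candidate metabelian frame'' requires an actual choice here, not just any lift of any base frame.

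The repair is small and is precisely the paper's normalization \eqref{eq:choice_Xi}: take the $Y_i$ to be the coordinate vector fields of a chart on $V$ (equivalently, require $d_q\pi(X_i(q))=\partial_{x_i}$), so that $[Y_i,Y_j]=0$. Then the display above shows $[X_i,X_j]$ is vertical with $\sigma$-independent coefficients, your base case holds, and the rest of your argument goes through and coincides with the paper's computation, where the vertical parts are written $Y_i(x,g)=dL_g\bar Y_i(x)$ and one finds $[X_i,X_j]=dL_g\big(\partial_{x_i}\bar Y_j-\partial_{x_j}\bar Y_i+[\bar Y_i,\bar Y_j]_{\Lie(G)}\big)$ with the last term killed by abelianness. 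Your justification of fiber-independence of the $a_i^\alpha$ is correct and uses the hypotheses exactly as the paper does: \eqref{eq:E1} plus the fact that for abelian $G$ left- and right-translations agree give $a_i^\alpha(b,\sigma g)=a_i^\alpha(b,\sigma)$, after observing via \eqref{eq:E2} that $\D\cap\ker d\pi=0$ so the vertical discrepancy must vanish. Finally, the closing remark about extending $X_1,\dots,X_r$ to a stratified basis is not needed: Definition~\ref{def:metabelian} only requires a local frame of $\D$ satisfying \eqref{eq:metabelian}.
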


\begin{proof}
	By assumption, $M$ is a $G$-principal bundle $\pi:M\to B$, with $G$ an abelian Lie group, and $\D$ is an Ehresmann connection.
	
	Fix $p\in M$. By definition of $G$-principal bundle, there exists a neighborhood $U$ of $p$ diffeomorphic to $\R^r\times G$, where $r=\dim(B)$. Moreover, by condition~\eqref{eq:E2}, the restriction $d_q\pi|_{\D_q}:\D_q\to T_{\pi(q)}B$ of $d_q\pi:\D_q\oplus \ker d_q\pi\to T_{\pi(q)}B$ is a linear isomorphism for every $q\in U$. We can identify $T_{\pi(q)}B$ with $\R^r$ and, after this identification, we can choose a local frame $X_1,\.,X_r$ for $\D$ in $U$ such that
	\begin{equation}
		\label{eq:choice_Xi}
		d_q\pi(X_i(q))=\d_{x_i}.
	\end{equation}
	
	With a slight abuse of notation, we write points $q\in U$ as couples $q=(x,g)$, with $x\in\R^r$ and $g\in G$. Moreover, we identify $T_qM$ with $\R^r\oplus T_gG$, and we write vectors $v\in T_qM$ as $v=v_1+v_2$, with $v_1\in\R^r$ and $v_2\in T_gG$. Then, as a consequence of~\eqref{eq:choice_Xi}, we have that the vector fields $X_i$, for all $i=1,\.,r$, are of the form
	\begin{equation}
		\label{eq:Xi}
		X_i(x,g)= \d_{x_i}+Y_i(x,g), \text{ for all $(x,g)\in \R^r\times G$},
	\end{equation}
	where $Y_i(x,g)$ is some vector in $T_gG$. 	We claim that formula~\eqref{eq:metabelian} holds for the vector fields in~\eqref{eq:Xi}.
		
	Fix $(x,g)\in U$. By~\eqref{eq:E1}, for all $i=1,\.,r$, the vector $Y_i$ appearing in~\eqref{eq:Xi} is of the form 
	\begin{equation}
		\label{eq:boh}
		Y_i(x,g)=dL_g \bar Y_i(x),
	\end{equation}
	for some $\bar Y_i:\R^r\to \Lie(G)\simeq T_e G$. Denoting by $\Phi^t$ the flow of $\d_{x_i}$ for time $t$ and by $\eee_i$ the $i$-th vector of the canonical basis of $\R^r$, we compute for all $i,j=1,\.,r$ the Lie derivative
	\begin{equation}
		\label{eq:boh2}
		\begin{split}
			\L_{\d_{x_i}}Y_j(x,g)&=\frac{\d}{\d\e}(\Phi^\e_{\d_{x_i}})^*Y_j(x,g)\Big|_{\e=0}\\
			&\stackrel{\eqref{eq:boh}}{=}dL_g\Big(\frac{\d}{\d\e}\bar Y_j(x+\e\eee_i)\Big|_{\e=0}\Big)\\
			&=dL_g\Big(\frac{\d\bar Y_j}{\d x_i}(x)\Big).
		\end{split}
	\end{equation} 
	Moreover, we also have for all $i,j=1,\.,r$
	\begin{equation}
		\label{eq:boh3}
		\begin{split}
			[X_i,X_j]=\L_{X_i}X_j
			&\stackrel{\eqref{eq:Xi}}{=}\L_{\d_{x_i}+ Y_i}(\d_{x_j}+Y_j)\\
			&\stackrel{\eqref{eq:boh}}{=} \L_{\d_{x_i}}Y_j + \L_{Y_i}\d_{x_j} + \L_{Y_i}Y_j\\
			&=\L_{\d_{x_i}}Y_j - \L_{\d_{x_j}}Y_i + \L_{Y_i}Y_j.
		\end{split}
	\end{equation}
	By~\eqref{eq:boh},~\eqref{eq:boh2}, and~\eqref{eq:boh3} we deduce for all $i,j=1,\.,r$
	\begin{equation}
		\begin{split}
			[X_i,X_j](x,g)&=dL_g\left(\dxy{\bar Y_j}{x_i}(x)-\dxy{\bar Y_i}{x_j}(x) + [\bar Y_i(x),\bar Y_j(x)]_{\Lie(G)}\right)\\
			&=dL_g\left(\dxy{\bar Y_j}{x_i}(x)-\dxy{\bar Y_i}{x_j}(x)\right),
		\end{split}
	\end{equation}
	where $[\cdot,\cdot]_{\Lie(G)}$ is the Lie bracket in $\Lie(G)$ and in the last identity we used that $G$ is abelian. Therefore we have for all $i,j=1,\.,r$
	\begin{equation}
		[X_i,X_j](x,g)=dL_g (\bar Y_{ij}(x)),
	\end{equation}
	for some $\bar Y_{ij}:\R^r\to\Lie(G)$. With the same strategy, one can prove that for every function $\bar Y:\R^r\to\Lie(G)$ and for all $k=1,\.,r$, if $Y(x,g):=dL_g\bar Y(x)$, it holds
	\begin{equation}
		[X_k,Y](x,g)=dL_g\left(\dxy{\bar Y}{x_k}(x) + [\bar Y_k(x),\bar Y(x)]_{\Lie(G)}\right)=dL_g\left(\dxy{\bar Y}{x_k}(x)\right).
	\end{equation}
	Therefore, for every multi-index $I$ there exists $\bar Y_I:\R^r\to\Lie(G)$ such that $X_I(x,g)=dL_g(\bar Y_I(x))$. Consequently, we have for all multi-indices $I,J$ of length greater than or equal to 2
	\begin{equation}
		\label{eq:fin_metab}
		[X_I,X_J](x,g)=dL_g\big([\bar Y_I(x),\bar Y_J(x)]_{\Lie(G)}\big).
	\end{equation}
	Since $G$ is abelian we have $[\bar Y_I(x),\bar Y_J(x)]_{\Lie(G)}=0$, proving our claim.
\end{proof}

\subsection{The analytic bracket-generating case} In general, Ehresemann connections are not bracket-generating distributions. This happens if and only if the holonomy group of the connection on the $G$-principal bundle coincides with the group $G$. In this section, we focus on (analytic) bracket-generating distributions, which are the only ones of interest in sub-Riemannian geometry.

\begin{theorem}
	\label{giallo2}
	Let $M$ be an analytic manifold equipped with an analytic equiregular distribution $\D$ of rank $r$. Then $\D$ is metabelian if and only if there exists a frame $X_1,\.,X_r$ of $\D$ such that, in exponential coordinates of second type, the functions $A_{k,j}$ of Theorem~\ref{giallo} depend only on the variables $x_1,\.,x_r$, i.e., we have
	\begin{equation}
		\label{eq:Xi2}
		\begin{split}
			&X_1(x)=\partial_{x_1}, \\
			&X_k(x)=\partial_{x_k}+\sum_{j=r+1}^n{A_{k,j}(x_1,\.,x_r)\partial_{x_j}}, \quad \text{for all $2\leq k\leq r$, and $x\in \R^n$}.
		\end{split}
	\end{equation}
\end{theorem}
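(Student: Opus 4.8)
The plan is to prove both implications by explicitly analyzing how the bracket structure of the frame $X_1,\dots,X_r$ interacts with exponential coordinates of the second type. I will rely on Theorem~\ref{giallo}, which already puts the frame in the triangular form \eqref{eq:campihermes}, and on the structural identity that in exponential coordinates of the second type the vector fields $X_{r+1},\dots,X_n$ of the stratified basis are of the form $\partial_{x_j} + (\text{higher-weight terms})$.

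\textbf{From \eqref{eq:Xi2} to metabelian.} Assume first that there is a frame $X_1,\dots,X_r$ whose coefficients $A_{k,j}$ in exponential coordinates of the second type depend only on $x_1,\dots,x_r$. The key observation is that for such a frame, the vector fields $X_1,\dots,X_r$ are \emph{projectable} under the map $\pi:\R^n\to\R^r$, $\pi(x_1,\dots,x_n)=(x_1,\dots,x_r)$: each $X_k$ projects to $\partial_{x_k}$, and moreover its vertical part $\sum_j A_{k,j}(x_1,\dots,x_r)\partial_{x_j}$ depends only on the base coordinates. One then computes iterated brackets: I claim by induction on the length $|I|$ that for every multi-index $I\in\I_r^{|I|}$ with $|I|\ge 2$, the iterated bracket $X_I$ is of the form $\sum_{j=r+1}^n B_{I,j}(x_1,\dots,x_r)\,\partial_{x_j}$, i.e. purely vertical with coefficients depending only on the base. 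The base step follows from \eqref{eq:Xi2} by direct computation of $[X_i,X_j]$, since the $\partial_{x_i}$-parts cancel and only terms $(\partial_{x_i}A_{j,\ell} - \partial_{x_j}A_{i,\ell} + \dots)\partial_{x_\ell}$ survive, all with coefficients in $x_1,\dots,x_r$. For the inductive step, $[X_k, X_I]$ with $X_I = \sum_j B_{I,j}(x_1,\dots,x_r)\partial_{x_j}$: writing $X_k = \partial_{x_k} + \sum_\ell A_{k,\ell}(x_1,\dots,x_r)\partial_{x_\ell}$, the bracket equals $\sum_j (\partial_{x_k} B_{I,j})\partial_{x_j}$ minus $\sum_j B_{I,j}\,\partial_{x_j}$ applied to the coefficients of $X_k$, which vanishes because $A_{k,\ell}$ and (inductively) $B_{I,j}$ depend only on $x_1,\dots,x_r$ while $\partial_{x_j}$ with $j>r$ differentiates in a vertical direction; hence $[X_k,X_I]$ is again purely vertical with base-only coefficients. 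Finally, for two such brackets $X_I, X_J$ with $|I|,|J|\ge 2$, both are purely vertical with coefficients in $x_1,\dots,x_r$, so $[X_I,X_J]=\sum_{j,\ell} (B_{I,j}\,\partial_{x_j}B_{J,\ell} - B_{J,j}\,\partial_{x_j}B_{I,\ell})\partial_{x_\ell} = 0$, since each $\partial_{x_j}$ with $j>r$ annihilates every $B_{\bullet,\ell}(x_1,\dots,x_r)$. This is exactly \eqref{eq:metabelian}, so $\D$ is metabelian.

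\textbf{From metabelian to \eqref{eq:Xi2}.} Conversely, suppose $\D$ is metabelian, and pick a local frame $X_1,\dots,X_r$ realizing \eqref{eq:metabelian}. Complete it to a stratified basis by choosing the $X_i$, $i>r$, among the iterated brackets $X_I$, and pass to the corresponding exponential coordinates of the second type; by Theorem~\ref{giallo} the frame has the form \eqref{eq:campihermes} with analytic $A_{k,j}(x)$. I must show $A_{k,j}$ does not depend on $x_{r+1},\dots,x_n$. The idea is to exploit that the chosen $X_i$ for $i>r$ are themselves iterated brackets of length $\ge 2$, hence by \eqref{eq:metabelian} they all commute with one another and with every iterated bracket of length $\ge 2$; in particular, along the flow directions $\partial_{x_j}=X_j$ for $j>r$ (which in exponential coordinates of the second type is the coordinate vector field by construction of Definition~\ref{def:exp_coord_2}), the Lie derivative $\L_{X_j}X_k = [X_j, X_k]$ can be controlled. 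More precisely, $[X_j, X_k]$ for $j>r$, $k\le r$ equals $-[X_k, X_j]$, and $X_j$ being an iterated bracket of length $\ge 2$, the Jacobi identity plus \eqref{eq:metabelian} force $[X_k, X_j]$ to be an iterated bracket that again lies in the span of the $X_i$ with $i>r$; writing out $[X_j, X_k]$ in coordinates using $X_j=\partial_{x_j}$ and \eqref{eq:campihermes} gives $[\partial_{x_j}, X_k] = \sum_{\ell>r} (\partial_{x_j} A_{k,\ell})\,\partial_{x_\ell}$. Thus $\partial_{x_j}A_{k,\ell}$ is the $\ell$-th component of a vector in the distribution's higher strata, and a weight/degree argument (the weight of $X_j$ is $\ge 2$, so $[X_k,X_j]$ has weight $\ge 3$, while $\partial_{x_j}A_{k,\ell}\,\partial_{x_\ell}$ would have to be a combination of vector fields of bounded weight) — together with the equiregularity and the triangular structure of exponential coordinates of the second type — should force $\partial_{x_j}A_{k,\ell}=0$ for all $j>r$, yielding \eqref{eq:Xi2}.

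\textbf{Main obstacle.} The forward implication (metabelian $\Rightarrow$ \eqref{eq:Xi2}) is where I expect the real difficulty. The subtle point is that \eqref{eq:metabelian} is a statement about a \emph{particular} frame $X_1,\dots,X_r$, and one must check that the \emph{same} frame, once completed to a stratified basis and expressed in its associated exponential coordinates of the second type, produces coefficients independent of the vertical variables — i.e. the bookkeeping that the brackets realizing the metabelian condition are exactly the ones used to build the coordinate system, and that no hidden dependence on $x_{r+1},\dots,x_n$ sneaks into $A_{k,j}$ through the nonlinearity of the flow composition in Definition~\ref{def:exp_coord_2}. Making the ``weight argument'' rigorous likely requires an induction on the weight $w_i$ showing simultaneously that $X_I$ is vertical with base-only coefficients and that $\partial_{x_j} A_{k,\ell}=0$, mirroring the clean induction available in the reverse direction; the cleanest route may in fact be to prove directly, by the same inductive scheme as in the first half, that metabelianity of the frame implies every $X_I$ with $|I|\ge 2$ is $\partial$-vertical with coefficients depending only on $x_1,\dots,x_r$, and then read off \eqref{eq:Xi2} from the $|I|=2$ case combined with how the higher coordinates were defined.
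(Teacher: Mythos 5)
Your first implication (\eqref{eq:Xi2} $\Rightarrow$ metabelian) is correct and complete: the induction showing that every iterated bracket $X_I$ with $|I|\ge 2$ is vertical with coefficients depending only on $x_1,\dots,x_r$, and that any two such fields commute, is exactly what \eqref{eq:metabelian} asks for, and it matches the structure the paper itself exploits later (e.g.\ in \eqref{eq:X12}).

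The converse, which is the substantive half (and the one the paper does not write out, deferring to \cite[Theorem~2.2]{MS21}), has a genuine gap. Your pivot is the claim that in exponential coordinates of the second type one has $\partial_{x_j}=X_j$ for $j>r$ ``by construction of Definition~\ref{def:exp_coord_2}''. This is false: from Definition~\ref{def:exp_coord_2} only $X_1=\partial_{x_1}$ is automatic, while for $j\ge 2$ the coordinate field is $\partial_{x_j}=\big(\Phi^{x_1}_{X_1}\circ\cdots\circ\Phi^{x_{j-1}}_{X_{j-1}}\big)_*X_j$, which differs from $X_j$ by iterated brackets with the horizontal fields. Already in the Engel group ($r=2$, $[X_1,X_2]=X_3$, $[X_1,X_3]=X_4$, $[X_2,X_3]=0$, which is metabelian) one finds $\partial_{x_3}=X_3-x_1X_4\neq X_3$ (up to the sign convention for pushforwards). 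Once this identity is removed, your formula $[\partial_{x_j},X_k]=\sum_{\ell>r}(\partial_{x_j}A_{k,\ell})\partial_{x_\ell}$ is still valid, but the proposed weight/degree count does not make it vanish: writing $\partial_{x_j}$ as a combination of length-$\ge 2$ brackets with nonconstant coefficients, its bracket with a horizontal $X_k$ produces terms $[X_I,X_k]$, which are generically nonzero and are not killed by \eqref{eq:metabelian} (metabelianity only annihilates brackets of two length-$\ge 2$ brackets), plus derivative-of-coefficient terms; no weight contradiction arises. What the vanishing actually requires is an analysis of how the flows entering Definition~\ref{def:exp_coord_2} transport the vertical fields --- commutativity of the flows of $X_{r+1},\dots,X_n$ and the fact that pushforwards along the horizontal flows preserve the span of the length-$\ge 2$ brackets --- which is precisely the computation carried out in \cite[Theorem~2.2]{MS21} for $r=2$ and invoked by the paper. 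Your closing suggestion to rerun ``the same inductive scheme as in the first half'' cannot work as stated, since that induction used \eqref{eq:Xi2} as its hypothesis; as written, the second implication is a plan rather than a proof.
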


The proof is similar to the one given in~\cite[Theorem 2.2]{MS21} in the case of $r=2$. We omit the details.

In the following proposition, we prove a local version of the opposite implication in Theorem~\ref{thm:Gpb}. Namely, we show that equiregular metabelian distributions (of rank $r$) are locally induced by a $\R^{n-r}$-principal bundle structure. 

\begin{proposition}
	Let $M$ be an analytic manifold, and let $\D\subset TM$ be an equiregular, analytic, bracket-generating, and metabelian distribution of rank $r$. Then $M$ is locally a $\R^{n-r}$-principal bundle and $\D$ is an Ehresmann connection for this bundle structure. 
\end{proposition}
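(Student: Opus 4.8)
The plan is to reduce to the coordinate description provided by Theorem~\ref{giallo2}. Since the statement is local, fix $p\in M$ and pass to exponential coordinates of the second type associated to a stratified basis $X_1,\dots,X_n$ extending a metabelian frame $X_1,\dots,X_r$ for $\D$. By Theorem~\ref{giallo2} we may assume the frame is chosen so that, in these coordinates on some neighborhood $U\cong\R^n$,
\begin{equation}
	X_1=\d_{x_1},\qquad X_k=\d_{x_k}+\sum_{j=r+1}^n A_{k,j}(x_1,\dots,x_r)\,\d_{x_j},\quad 2\le k\le r,
\end{equation}
with the $A_{k,j}$ depending only on the first $r$ coordinates. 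Writing $x=(y,z)$ with $y=(x_1,\dots,x_r)\in\R^r$ and $z=(x_{r+1},\dots,x_n)\in\R^{n-r}$, the natural candidates are: the bundle projection $\pi:U\to\R^r$, $\pi(y,z)=y$, and the $\R^{n-r}$-action by translation in the $z$-variables, $(y,z)\cdot w=(y,z+w)$.

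First I would check that $\pi$ is a submersion onto $\R^r$ and that the $\R^{n-r}$-action is free, proper, and has $\pi$ as its quotient map; all of this is immediate from the product structure of the coordinates, so $U\to\R^r$ is trivially an $\R^{n-r}$-principal bundle. Next, verify condition~\eqref{eq:E2}: at every $q=(y,z)$, $\ker d_q\pi$ is spanned by $\d_{x_{r+1}},\dots,\d_{x_n}$, while $\D_q=\Span\{X_1(q),\dots,X_r(q)\}$ projects isomorphically onto $T_{\pi(q)}\R^r$ because $d_q\pi(X_k(q))=\d_{x_k}$; hence $\D_q\oplus\ker d_q\pi=T_qM$. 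Finally, verify invariance~\eqref{eq:E1}: the flow of the $\R^{n-r}$-action is translation in $z$, and since the coefficients $A_{k,j}$ do not depend on $z$, each $X_k$ is invariant under these translations, so $\D$ is preserved by the group action. This is exactly the point where the metabelian hypothesis is used — without the $z$-independence furnished by Theorem~\ref{giallo2}, the distribution would not be $G$-invariant.

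Having established these three facts, $\D$ is an Ehresmann connection for the $\R^{n-r}$-principal bundle $U\to\R^r$, which is the assertion. I expect the only genuine content to be the appeal to Theorem~\ref{giallo2} to normalize the frame so that the coefficients are $z$-independent; the remaining verifications are routine once the coordinates are in this form. A minor point worth stating carefully is that the construction is only local — the identification $U\cong\R^r\times\R^{n-r}$ comes from exponential coordinates of the second type around a single point, and there is no claim that these local bundle charts glue to a global principal bundle structure on $M$, which is consistent with the fact (noted just before the proposition) that the holonomy of such connections need not be all of $\R^{n-r}$.
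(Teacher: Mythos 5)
Your proposal is correct and follows essentially the same route as the paper: invoke Theorem~\ref{giallo2} to obtain a frame of the form~\eqref{eq:Xi2} in exponential coordinates, then read off translation-invariance in the last $n-r$ variables for~\eqref{eq:E1} and the direct-sum decomposition for~\eqref{eq:E2}. Your write-up is just slightly more explicit about the principal-bundle verifications and the local nature of the statement.
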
 

\begin{proof}
	Let $p\in M$. Since $M,\D$ are analytic, by Theorem~\ref{giallo2} there exists a neighborhood $U$ of $p$ diffeomorphic to $\R^n$ and a frame $X_1,\.,X_r$ for $\D$ in $U$ of the form~\eqref{eq:Xi2}.	
	
	Denoting points of $\R^n$ as couples $x=(x',x'')\in\R^r\oplus\R^{n-r}$, we have that
	\begin{equation}
		X_i(x+(0,\tau))=X_i(x),
	\end{equation}
	for all $x\in\R^n$ and $\tau\in\R^{n-r}$. Therefore, every $X_i$ (and so the distribution) is invariant under the action by translation of the additive group $\{0\}\x\R^{n-r}\subset\R^n$ on $\R^n$. Moreover, by~\eqref{eq:Xi2} we have that $\R^n=\D_x\oplus\R^{n-r}$, proving that $U$ is a $\R^{n-r}$-principal bundle and $\D$ is an Ehresmann connection for this bundle.
\end{proof}

We end this section with a useful formula for the differential of the flow for metabelian and bracket-generating distributions.

\begin{lemma}
	\label{lem:differential_flow}
	Let $X_1,\.,X_r\in\Vecc(\R^n)$ be vector fields of the form~\eqref{eq:Xi2}. Let $u\in L^2(I,\R^r)$. Denote by $\Phi:\R\x\R^n\to\R^n$ the flow of $u$ (see Definition~\ref{def:flow_u}). Then we have
	\begin{equation}
		\label{eq:differential_flow}
		(\Phi^t)_*\d_{x_i}=\d_{x_i}, \quad \text{for all } i\in\{r+1,\.,n\}.
	\end{equation}
\end{lemma}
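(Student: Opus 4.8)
The plan is to show that the coordinate vector fields $\d_{x_i}$ for $i > r$ are flow-invariant by proving that each such $\d_{x_i}$ commutes with the non-autonomous field $X_u(t,\cdot) = \sum_{k=1}^r u_k(t) X_k$ that generates $\Phi$. Concretely, I would recall the standard fact that for a (possibly time-dependent) complete field $Z_t$ with flow $\Phi^t$, one has $\frac{d}{dt}(\Phi^t)_* W = (\Phi^t)_*[Z_t, W] = -(\Phi^t)_*\mathcal L_{Z_t} W$ for any fixed field $W$ (here orienting signs so that $(\Phi^t)_*$ denotes pushforward by the flow from time $0$ to time $t$; the precise convention only affects a sign and not the argument). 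Hence if $[Z_t, \d_{x_i}] = 0$ for a.e.\ $t$, the curve $t \mapsto (\Phi^t)_*\d_{x_i}$ is constant, equal to its value $\d_{x_i}$ at $t=0$, which is exactly~\eqref{eq:differential_flow}.

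So the heart of the matter is the pointwise identity $[X_k, \d_{x_i}] = 0$ for every $k \in \{1,\dots,r\}$ and every $i \in \{r+1,\dots,n\}$; then $[X_u(t,\cdot), \d_{x_i}] = \sum_k u_k(t)[X_k, \d_{x_i}] = 0$ follows by linearity. This is where the form~\eqref{eq:Xi2} of the vector fields — i.e.\ the metabelian hypothesis, via Theorem~\ref{giallo2} — is used. For $k=1$ we have $X_1 = \d_{x_1}$, and $[\d_{x_1}, \d_{x_i}] = 0$ trivially. For $2 \le k \le r$ we compute
\begin{equation}
	[X_k, \d_{x_i}] = \Big[\d_{x_k} + \sum_{j=r+1}^n A_{k,j}(x_1,\dots,x_r)\,\d_{x_j},\ \d_{x_i}\Big] = -\sum_{j=r+1}^n \big(\d_{x_i} A_{k,j}\big)\,\d_{x_j},
\end{equation}
using that the coordinate fields mutually commute. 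Since $i \ge r+1$ and each $A_{k,j}$ depends only on $x_1,\dots,x_r$, every derivative $\d_{x_i} A_{k,j}$ vanishes, so $[X_k, \d_{x_i}] = 0$, as wanted. (It is worth noting that this step genuinely fails for the general fields~\eqref{eq:campihermes} of Theorem~\ref{giallo}, where the $A_{k,j}$ may depend on $x_i$; this is precisely the point at which metabelianness enters.)

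The remaining care is analytic rather than conceptual: one must make sure the ODE argument survives the fact that $u \in L^2(I,\R^r)$, so $Z_t = X_u(t,\cdot)$ is only $L^2$ in $t$. This is handled by working with the Carathéodory solution: $t \mapsto \Phi^t$ is absolutely continuous, the map $t \mapsto (\Phi^t)_*\d_{x_i}$ (read in the fixed global coordinates of $\R^n$, where pushforward is multiplication by the Jacobian matrix $D_x\Phi^t$) is absolutely continuous, and its a.e.\ derivative is $(\Phi^t)_*[Z_t,\d_{x_i}] = 0$; an absolutely continuous function with a.e.-zero derivative is constant. The completeness assumption on $X_1,\dots,X_r$ stated in Section~2.2 ensures $\Phi^t$ is globally defined, so no domain issues arise. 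I expect the main (very mild) obstacle to be simply stating the flow-derivative formula in the non-autonomous $L^2$ setting cleanly; the algebraic core — that $\d_{x_i} A_{k,j} = 0$ kills the bracket — is immediate from~\eqref{eq:Xi2}.
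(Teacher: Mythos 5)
Your proposal is correct, but it runs along a different track than the paper's own proof. The paper argues by brute force: using the triangular form~\eqref{eq:Xi2} it writes the Carathéodory solution componentwise, $(\Phi^t)_j(x)=x_j+\int_a^t u_j(s)\,ds$ for $j\le r$ and $(\Phi^t)_j(x)=x_j+\int_a^t\sum_k u_k(s)A_{k,j}((\Phi^s)_1(x),\dots,(\Phi^s)_r(x))\,ds$ for $j>r$, and then simply differentiates these integral formulas in $x_i$, $i>r$: the first $r$ components do not involve $x_i$ at all, so the integrands in the remaining components do not either, giving $\d(\Phi^t)_j/\d x_i=\delta_{ij}$ directly. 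You instead isolate the infinitesimal reason, $[X_k,\d_{x_i}]=0$ (equivalently, the $X_k$ are invariant under translations in the coordinates $x_{r+1},\dots,x_n$), and propagate it through the flow via the non-autonomous Lie-derivative/variational-equation identity, with the Carathéodory regularity handled by absolute continuity. Both arguments hinge on exactly the same feature of~\eqref{eq:Xi2} — that the $A_{k,j}$ depend only on $x_1,\dots,x_r$ — and your remark that this is precisely where metabelianness (via Theorem~\ref{giallo2}) enters is the right diagnosis. The paper's computation buys self-containedness: it never needs the formula $\frac{d}{dt}(\Phi^t)^*W=(\Phi^t)^*[X_u(t,\cdot),W]$ for merely $L^2$-in-time fields, since the explicit integral representation already encodes the flow. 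Your route buys conceptual clarity and generalizes beyond the specific triangular normal form; if you write it up, it is slightly cleaner to avoid the pushforward $(\Phi^t)_*\d_{x_i}$ (which drags in the inverse flow) and instead either apply the variational equation to the Jacobian column $t\mapsto D_x\Phi^t(x)e_i$, noting $\d_{x_i}X_u(t,\cdot)=0$ so the constant $e_i$ solves it and uniqueness concludes, or work with $(\Phi^t)^*\d_{x_i}$ and then push forward at the end — either way the sign ambiguity you flag disappears.
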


\begin{proof}
	Let $I=[a,b]$. For all $1\leq j\leq n$, by~\eqref{eq:campihermes} we have
	\begin{align}
		&(\Phi^t)_j(x)=x_j+\int_a^t u_j(s)ds, \quad 1\leq j\leq r,\\
		&(\Phi^t)_j(x)=x_j+\int_a^t \sum_{k=1}^{r}u_k(s)A_{k,j}((\Phi^s)_1(x),\.,(\Phi^s)_r(x))ds \quad r+1\leq j\leq n.
	\end{align}
	Therefore we deduce
	\begin{equation}
		\frac{\d(\Phi^t)_j}{\d x_i}(x)=
		\begin{cases}
			1, \quad j=i,\\
			0, \quad j\neq i,
		\end{cases}
	\end{equation}
	for all $r+1\leq i\leq n$, $1\leq j\leq n$, and $x\in\R^n$. The latter equation implies~\eqref{eq:differential_flow}.
\end{proof}

\section{Proof of Theorem~\ref{thm:main}: $C^1$-regularity of geodesics}

In this section, $(M,\D,g)$ is a sub-Riemannian manifold. We are going to assume $M=\R^n$, $\D\subset TM$ is analytic, metabelian of rank $r$, and with frame $X_1,\.,X_r$ of the form~\eqref{eq:Xi2}.

We explicitly compute the bracket of $X_h,X_k$, that is
\begin{equation}
	\label{eq:X12}
	Y^{h,k}(x):=[X_h,X_k](x)\stackrel{\eqref{eq:Xi2}}{=}\sum_{j=r+1}^{n}\Big(\dxy{A_{k,j}}{x_h}(x_1,\.x_r)-\dxy{A_{h,j}}{x_k}(x_1,\.x_r)\Big)\d_{x_j},
\end{equation} 
where by convention we let $A_{1,j}=0$ for all $j=r+1,\.,n$.
We also define, for every covector $\la\in(\R^n)^*$, the analytic functions $F_\la^{h,k}:\R^r\to\R$,
\begin{equation}
	\label{eq:Fla}
	F_\la^{h,k}(x):=\la\left(\sum_{j=r+1}^{n}\Big(\dxy{A_{k,j}}{x_h}(x_1,\.x_r)-\dxy{A_{h,j}}{x_k}(x_1,\.x_r)\Big)\d_{x_j}\right), \; x\in\R^r.
\end{equation}

\begin{definition}
	\label{def:kg}
	For a horizontal curve $\g:I\to \R^n$, $\g=(\g_1,\.,\g_n)$, we denote by $\k_\g:I\to\R^r$ the linear projection of $\g$ on the first $r$ coordinates, that is $\k_\g:=(\g_1,\.,\g_r)$.
\end{definition}

In order to prove our Theorem~\ref{thm:main}, we in fact prove the following slightly stronger statement.

\begin{theorem}
	\label{thm:forte}
	Let $\D$ be an analytic, metabelian distribution of rank $r$ on $\R^n$ with frame $X_1,\.,X_r$ of the form~\eqref{eq:Xi2}. Let $\g:I\to\R^n$ be a horizontal curve satisfying Goh conditions~\eqref{eq:propGoh} for some covector $\la\in(\R^n)*\setminus\{0\}$. Then the image of $\k_\g$ is contained in the analytic variety
	\begin{equation}
		V_\la:=\{x\in\R^r\,|\, F^{h,k}_\la(x)=0, \text{ for all } h,k=1,\.,r\}\subset\R^r.
	\end{equation}
\end{theorem}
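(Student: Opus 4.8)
The plan is to unwind the Goh condition \eqref{eq:propGoh} using the explicit coordinate description of the frame and the flow given by Lemma~\ref{lem:differential_flow}. First I would fix a horizontal curve $\g:I\to\R^n$ satisfying \eqref{eq:propGoh} for a nonzero covector $\la\in(\R^n)^*$, and let $u\in L^2(I,\R^r)$ be the control associated to $\g$, so that $\g(t)=\Phi_u^t(\g(0))$. The Goh condition says $\la\big((\Phi_u^t)^*Y\big)=0$ for all $Y\in\D^2_{\g(t)}$ and all $t\in I$. Since $\D$ has rank $r$ and frame $X_1,\dots,X_r$ of the form \eqref{eq:Xi2}, the second layer $\D^2_{\g(t)}$ is spanned by the $X_k(\g(t))$ together with the brackets $Y^{h,k}(\g(t))=[X_h,X_k](\g(t))$ for $1\le h,k\le r$. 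So it suffices to extract the information carried by $\la\big((\Phi_u^t)^*Y^{h,k}(\g(t))\big)=0$.

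The key computation is to rewrite $(\Phi_u^t)^*Y^{h,k}(\g(t))$ at the base point $\g(0)$. Since $Y^{h,k}(x)=\sum_{j=r+1}^n c^{h,k}_j(x_1,\dots,x_r)\,\d_{x_j}$ with $c^{h,k}_j=\dxy{A_{k,j}}{x_h}-\dxy{A_{h,j}}{x_k}$ by \eqref{eq:X12}, and since $(\Phi_u^t)^* = (\Phi_u^{-t})_* = ((\Phi_u^t)_*)^{-1}$ acts on tangent vectors, I would use Lemma~\ref{lem:differential_flow} — which gives $(\Phi_u^t)_*\d_{x_i}=\d_{x_i}$ for $i>r$, equivalently $(\Phi_u^t)^*\d_{x_i}=\d_{x_i}$ — to conclude
\begin{equation}
	(\Phi_u^t)^* Y^{h,k}(\g(t)) = \sum_{j=r+1}^n c^{h,k}_j(\g_1(t),\dots,\g_r(t))\,\d_{x_j} = \sum_{j=r+1}^n c^{h,k}_j(\k_\g(t))\,\d_{x_j}.
\end{equation}
Applying $\la$ and comparing with \eqref{eq:Fla}, this is exactly $F^{h,k}_\la(\k_\g(t))$. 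Hence the Goh condition forces $F^{h,k}_\la(\k_\g(t))=0$ for every $t\in I$ and every pair $h,k$, which is precisely the statement that the image of $\k_\g$ lies in $V_\la$.

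The one point requiring a little care — and the main (mild) obstacle — is making the pullback bookkeeping rigorous: $(\Phi_u^t)^*$ in the statement of the PMP/Goh conditions pushes a covector at $\g(0)$ forward, or equivalently pulls a tangent vector at $\g(t)$ back to $\g(0)$, and one must check that Lemma~\ref{lem:differential_flow} applies in the right direction, i.e. that $\d_{x_i}$ at $\g(t)$ is sent to $\d_{x_i}$ at $\g(0)$ for $i>r$. This follows because \eqref{eq:differential_flow} holds for all $t$ (in particular for $-t$, or by observing the differential of the flow is block-triangular and equals the identity on the span of $\d_{x_{r+1}},\dots,\d_{x_n}$ in both directions). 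A secondary point is to justify that $\D^2$ is spanned by the listed vectors, which is immediate from the metabelian normal form \eqref{eq:Xi2} and the definition of $\D^2$, noting that the coefficients $c^{h,k}_j$ depend only on $x_1,\dots,x_r$ so that evaluating at $\g(t)$ really only sees $\k_\g(t)$. Once these are in place, the analyticity of $V_\la$ is clear since each $F^{h,k}_\la$ is a linear combination of partial derivatives of the analytic functions $A_{k,j}$.
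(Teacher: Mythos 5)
Your proposal is correct and follows essentially the same route as the paper: it uses Lemma~\ref{lem:differential_flow} to show that the pullback along the flow acts as the identity on the $\d_{x_j}$, $j>r$, so that the Goh condition $\la\big((\Phi_u^t)^*Y^{h,k}\big)=0$ becomes exactly $F^{h,k}_\la(\k_\g(t))=0$, i.e.\ $\IM(\k_\g)\subset V_\la$. Your extra remarks on the pullback/pushforward bookkeeping and on the spanning of $\D^2$ are sound clarifications of steps the paper leaves implicit.
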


\begin{proof}
	By \eqref{eq:propGoh}, for all $h,k=1,\.,r$, we have that
	\begin{equation}
		\label{eq:Goh}
		\la\big((\Phi^t)^*Y^{h,k}(0)\big)=0, \quad \text{for all } t\in I.
	\end{equation}
	Here we are going to apply our key result in Lemma~\ref{lem:differential_flow}. Then, by~\eqref{eq:differential_flow} and~\eqref{eq:X12}, Equation~\eqref{eq:Goh} above reads
	\begin{equation}
		\label{eq:Goh2}
		\la\big(Y^{h,k}(\g(t))\big)=0, \quad \text{for all } t\in I.
	\end{equation}
	In particular, by~\eqref{eq:X12},~\eqref{eq:Fla} and~\eqref{eq:Goh2} it follows that, for all $h,k$,
	\begin{equation}
		\label{eq:F_la(k_g)=0}
		F^{h,k}_\la(\k_\g(t))=0,
	\end{equation}
	for all $t\in I$, and this complete the proof.	
\end{proof}

Combining Remark~\ref{rem:pmp_goh} and Theorem~\ref{thm:forte} we obtain the following corollary for rank $2$ distributions.

\begin{corollary}
	\label{cor:abnrk2}
	Let $\D$ be an analytic, metabelian distribution of rank $2$. Let $\g:I\to\R^n$ be an abnormal curve. Then the image of $\k_\g$ is contained in the analytic variety
	\begin{equation}
		V_\la:=\{x\in\R^2\,|\, F^{1,2}_\la(x)=0\}\subset\R^2.
	\end{equation}
\end{corollary}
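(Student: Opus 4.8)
The plan is to deduce Corollary~\ref{cor:abnrk2} directly from Theorem~\ref{thm:forte} together with the remark on rank-2 distributions. First I would recall from Remark~\ref{rem:pmp_goh} that when $\D$ has rank $2$, every non-constant abnormal curve automatically satisfies the Goh conditions~\eqref{eq:propGoh}. So let $\g:I\to\R^n$ be an abnormal curve; by the Pontryagin Maximum Principle (Proposition~\ref{prop:PMP}, equation~\eqref{eq:PMP_abnormal}) there is a nonzero covector $\la\in(\R^n)^*\setminus\{0\}$ witnessing the abnormality of $\g$, and by Remark~\ref{rem:pmp_goh} the same $\la$ makes~\eqref{eq:propGoh} hold along $\g$ (the constant-curve case being trivial, since its projection is a single point, which lies in $V_\la$ or makes the statement vacuous depending on convention).

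Next I would simply feed this data into Theorem~\ref{thm:forte}: with $r=2$, the distribution analytic and metabelian, and $\g$ satisfying Goh conditions for the covector $\la\neq 0$, the theorem yields that the image of $\k_\g$ is contained in
\[
V_\la=\{x\in\R^2\,\mid\, F^{h,k}_\la(x)=0\ \text{for all}\ h,k=1,2\}.
\]
The last step is to observe that this set collapses to the stated one. Indeed $F^{h,h}_\la\equiv 0$ by the antisymmetry visible in~\eqref{eq:Fla} (the bracket $Y^{h,h}=[X_h,X_h]$ vanishes), and $F^{2,1}_\la=-F^{1,2}_\la$, again from~\eqref{eq:X12}--\eqref{eq:Fla}; hence the single equation $F^{1,2}_\la(x)=0$ cuts out exactly the same variety, and $V_\la=\{x\in\R^2\mid F^{1,2}_\la(x)=0\}$ as claimed.

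There is no real obstacle here — the corollary is a packaging of Theorem~\ref{thm:forte} in the rank-2 case, and the only points requiring a word are (i) invoking Remark~\ref{rem:pmp_goh} to get the Goh conditions for free, and (ii) noting the redundancy among the $F^{h,k}_\la$ so that the four equations reduce to one. If one wants to be careful about the constant curve, one can either exclude it explicitly or note that a point curve trivially has image contained in any nonempty $V_\la$ (and $V_\la$ is nonempty because, e.g., it contains $\k_\g(I)$ whenever $\g$ is non-constant abnormal, and the constant case can be absorbed by choosing $\la$ appropriately).
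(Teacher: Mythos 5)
Your proposal is correct and follows exactly the route the paper intends: the corollary is stated there as an immediate combination of Remark~\ref{rem:pmp_goh} (Goh conditions come for free for rank-2 abnormals) with Theorem~\ref{thm:forte}, plus the trivial observation that $F^{1,1}_\la=F^{2,2}_\la=0$ and $F^{2,1}_\la=-F^{1,2}_\la$, so the variety is cut out by the single equation $F^{1,2}_\la=0$. Your extra care about the constant-curve case goes slightly beyond what the paper records, but it does not change the argument.
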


We end this section with the proof of our main Theorem~\ref{thm:main}.

\begin{proof}[Proof of Theorem~\ref{thm:main}]

By Theorem~\ref{giallo}, a horizontal curve $\g$ is uniquely determined by its projection $\k_\g$ and, moreover, $\g$ has the same regularity of $\k_\g$ because they have the same control (with respect to different sub-Riemannian structures). Therefore, we prove the regularity of projections of geodesics.

Let $\g:I\to\R^n$ be a strictly abnormal geodesic, and let us prove that its projection $\k_\g$ is $C^1$, and analytic outside of a discrete set of point. 
 
By Corollary~\ref{cor:abnrk2}, the image of $\k_\g$ is contained in the analytic variety
\begin{equation}
	V_\la:=\{x\in\R^2\,|\, F^{1,2}_\la(x)=0\}\subset\R^2.
\end{equation}
It is known that there exists a closed discret set $S\subset V_\la$ such that $V_\la\setminus S$ is a real one-dimensional analytic manifold, and every connected component of $V_\la\setminus S$ can be parameterized by arc-length analytically (see~\cite[Appendix~B]{BFPR22}). Therefore $\k_\g$ is analytic on the set $I\setminus \k_\g^\1(S)$. Notice that, since $\k_\g$ is parameterized by arc-length and $\IM(\k_\g)\subset V_\la$, the set $\k_\g^{-1}(S)\subset I$ is a discrete set.

We are left to prove the $C^1$-regularity of $\k_\g$ in points of $S\cap\mr{Im}(\k_\g)$. Fix $x\in S$ and let $\bar t\in I$ be such that $\k_\g(\bar t)=x$. By Newton-Puiseaux Theorem (see~\cite[Chapter~2]{Cut04}, \cite[Appendix~B]{BFPR22}, or~\cite{Pui50}) in a neighborhood of $x$, we have that $V_\la\setminus\{x\}$ is a disjoint union
\begin{equation}
	V_\la\setminus\{x\}=\bigsqcup_{m=1}^N \big(\mr{Im}(\eta_j)\setminus\{x\}\big),
\end{equation}
for some $N\in\N$, and for all $j=1,\.,N$, the curve $\eta_j:[0,\de)\to\R^2$ satisfies
\begin{itemize}
	\item[i)] $\eta_j(0)=x$;
	\item[ii)] $\eta_j(t)\neq x$ for all $t\in(0,\de)$;
	\item[iii)] $\eta_j$ is parameterized by arc-length;
	\item[iv)] the limit $\lim_{t\to0^+}\dot\eta_j(t)$ exists.
\end{itemize}

By (iii) and being $\k_\g$ parameterized by arc-length, there exist (up to shrink $\de$) $j_1,j_2\in\{1,\.,N\}$ such that $\k_\g(\bar t-s)=\eta_{j_1}(s)$, and $\k_\g(\bar t+s)=\eta_{j_1}(s)$ for $s\in[0,\de)$. Therefore, by (iv), the limits $\lim_{t\to\bar t^-}\k_g(t)$, $\lim_{t\to\bar t^+}\k_g(t)$ exist. Since $\g$ is a geodesic, the two limits must coincide (see~\cite{HL16}). This complete the proof of the $C^1$-regularity of $\g$.
\end{proof}

\section{A counter-example in the non-metabelian case} 

\newcommand{\FF}{\mathbb{F}}
In this short section we provide an example that shows how the metabelian assumption for the distribution $\D$ is an essential ingredient in our proof of Theorem~\ref{thm:main}.

Let $\FF=\FF_{2,7}$ be the free nilpotent Lie group of rank 2 and step 7. Fix two left-invariant vector fields $X_1^L,X_2^L$ that generate the Lie algebra Lie$(\FF)$ of $\FF$. Let $\D$ be the distribution with frame $X_1^L,X_2^L$, and complete $X_1^L,X_2^L$ to a stratified basis $X_1^L,X_2^L,\.,X_n^L$ for $\D$ (see Definition~\ref{def:stratbas}).

\begin{remark}
	The distribution $\D$ is not metabelian. Indeed, assume that there exists a frame $X_1,X_2$ of the distribution such that \eqref{eq:metabelian} holds. On the one hand, we have $\Lie(\FF) \subseteq \vspan \{X_J \mid J\in I_m^2, m\leq 7 \}$. On the other hand, the dimension of $\Lie(\FF)$  is strictly bigger than the dimension of all nilpotent metabelian Lie algebras of rank $2$ and step $7$. Consequently, we have $ \vspan \{X_J \mid J\in I_m^2, m\leq 7 \}<\dim(\Lie(\FF))$, and we reached a contradiction.
\end{remark}

\begin{remark}
	\label{rem:diag_comm}
	Since $X_1^L,X_2^L$ are left-invariant, all the $X_j^L$'s are left-invariant. By~\cite[Proposition 1.2.8]{Gre-Co}, exponential coordinates of second type $\varphi_n^L:\R^n\to\FF$ (see Definition~\ref{def:exp_coord_2}) are a global diffeomorphism. Moreover, also the map $\varphi_2^L:\R^2\to\FF/[\FF,\FF]$ defined by
	\begin{equation}
		\varphi_2^L(x_1,x_2):=[\FF,\FF]\exp(x_2X_2^L)\exp(x_1X_1^L), \quad \text{for all } x_1,x_2\in\R,
	\end{equation} 
	is a global diffeomorphism (see~\cite[Theorem 1.2.12]{Gre-Co}). Therefore, the following diagram commutes
	\begin{equation}
		\begin{tikzcd}
			\R^n \arrow[r, "\varphi_n^L"] \arrow[d, "\pi"] & \FF \arrow[d, "\bar\pi"] \\
			\R^2 \arrow[r, "\varphi_2^L"]                     & {\FF/[\FF,\FF],}         
		\end{tikzcd}
	\end{equation}
	where $\pi(x_1,\.,x_n):=(x_1,x_2)$ and $\bar\pi$ is the canonical projection to the abelianization of $\FF$.
\end{remark}

Here and hereafter, for a stratified basis $\mc X=\{X_1,\.,X_n\}$ for $\D$ we short denote $\varphi_n^{\mc X}:\R^n\to\FF$ the exponential coordinates of second type, see Definition \ref{def:exp_coord_2}.

Now we build our example. Let $\k:[0,1]\to\R^2$ be the spiral curve defined by
\begin{equation}
	\label{eq:spiral}
	\k(t):=\begin{cases}
		t\eee^{i\phi(t)}, \quad &t\in(0,1],\\
		0, \quad &t=0,
	\end{cases}
\end{equation}
where $\phi(t):=-\log t$ is the phase of the spiral. We set $\bar\k:=\varphi_2^L\circ\k$, and we define $\bar\g:[0,1]\to\FF$ to be the unique horizontal lift of $\bar\k$.

\begin{proposition}
	The curve $\bar\g$ is abnormal in $\FF$. Moreover, for every stratified basis $\mc X=\{X_1,\.,X_n\}$ for $\D$ in a neighborhood of $1_\FF$, the curve $\k_{\bar\g}:=\pi\circ(\phi_n^{\mc X})^\1\circ\bar\g$ is not contained in an analytic variety of dimension 1. 
\end{proposition}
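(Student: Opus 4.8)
The proposition has two assertions: (a) $\bar\g$ is abnormal in $\FF$; (b) for every stratified basis $\mc X$, the projected curve $\k_{\bar\g}$ does not lie in any one-dimensional analytic variety of $\R^2$. I would handle these separately, the first by a direct computation exploiting the free nilpotent structure, the second by a contradiction argument combining the spiral geometry with the structure theory of analytic curves.

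\textbf{Step 1: $\bar\g$ is abnormal.} Since $\FF=\FF_{2,7}$ is free nilpotent of step $7$, every covector $\la\in(\Lie\FF)^*$ that vanishes on the subspace spanned by $X_1^L,X_2^L$ and on brackets of length $\le 6$ (equivalently, $\la$ supported on the last layer, which is non-trivial) annihilates $\D$ along \emph{any} horizontal curve: indeed, for a left-invariant frame the pull-back $(\Phi_u^t)^*X_i^L$ stays in $\D+\D^2+\dots+\D^6$ at the relevant point by nilpotency considerations — more precisely, one uses that the adjoint action of horizontal flows on $X_1^L, X_2^L$ produces only terms of weight $\le 7$, and a covector killing everything of weight $\le 6$ kills all of these. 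Hence \eqref{eq:PMP_abnormal} holds for such $\la\ne 0$, so $\bar\g$ (like every horizontal curve in $\FF$) is abnormal. Actually it is cleaner to invoke the standard fact that in a Carnot group of step $s\ge 3$ every horizontal curve is abnormal, via a covector supported on the top layer; I would state this and give the one-line justification above.

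\textbf{Step 2: reduce (b) to a statement about $\k$ itself.} By Remark~\ref{rem:diag_comm} the diagram commutes for the left-invariant basis $\mc X^L$, so $\k_{\bar\g} = \pi\circ(\varphi_n^L)^{-1}\circ\bar\g = \varphi_2^{L,-1}\circ\bar\pi\circ\bar\g = \varphi_2^{L,-1}\circ\bar\k = \k$, i.e.\ for the left-invariant basis the projection is exactly the spiral $\k$ of \eqref{eq:spiral}. For a general stratified basis $\mc X$, the transition map $\varphi_2^{\mc X,-1}\circ\varphi_2^{L}$ is an analytic diffeomorphism of a neighbourhood of $0$ in $\R^2$ (both are exponential-coordinate charts of $\FF/[\FF,\FF]\cong\R^2$ compatible with the bundle projection), so $\k_{\bar\g}=\Psi\circ\k$ with $\Psi$ an analytic local diffeo fixing $0$. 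Since an analytic diffeomorphism maps one-dimensional analytic varieties to one-dimensional analytic varieties, it suffices to prove that the spiral $\k$ itself is not contained in any one-dimensional analytic variety of $\R^2$.

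\textbf{Step 3: the spiral is not in a $1$-dimensional analytic variety.} Suppose $\IM(\k)\subset V=\{P=0\}$ for some non-zero analytic $P$ near $0$. The curve $\k(t)=t e^{i\phi(t)}$, $\phi(t)=-\log t$, accumulates at $0$ spiralling infinitely; in particular it meets every line through the origin in a sequence of points converging to $0$. By the Newton–Puiseux / structure theorem cited in the paper (\cite[Appendix~B]{BFPR22}, \cite{Cut04}), near $0$ the variety $V\setminus\{0\}$ is a \emph{finite} disjoint union of analytic arcs $\eta_1,\dots,\eta_N$ each having a well-defined tangent direction $\lim_{t\to0^+}\dot\eta_j(t)$ at $0$. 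Hence $V$ meets only finitely many directions at $0$: any line through $0$ not among these finitely many tangent directions meets $V$ in an isolated set away from $0$. But $\k$, lying in $V$, would then be forced into the finite union of these arcs, each of which reaches $0$ along a single fixed direction — contradicting the fact that the unit tangent $\dot\k(t)/|\dot\k(t)|$ of the spiral does not converge as $t\to0^+$ (the phase $\phi(t)\to+\infty$). I expect \textbf{this step to be the main obstacle}: one must argue carefully that a set with finitely many tangent arcs at $0$ cannot contain a curve whose tangent direction rotates infinitely often. The cleanest route is: pick a direction $\theta_0$ not realized as a tangent of any $\eta_j$; then $V$ is disjoint from a punctured sector around the ray $\theta_0$ near $0$; but $\k$ enters that sector for a sequence of times $t_k\to 0$, giving the contradiction.

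Assembling Steps 1–3 proves the proposition; the remaining details (the nilpotency bookkeeping in Step 1 and the sector argument in Step 3) are routine once the framework is set up.
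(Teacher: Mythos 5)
Your Step 1 contains a genuine error, and it is precisely at the point where the real content of the proposition lies. It is not true that in a Carnot group of step $s\ge 3$ every horizontal curve is abnormal, and your one-line justification does not work: for a left-invariant frame the abnormal equation~\eqref{eq:PMP_abnormal} reduces (after left-translating $\la$ to the Lie algebra) to $\la(\Ad_{g(t)}X_i^L)=0$, where $g(t)$ is the development of the control from the identity, and $\Ad_{g(t)}X_i^L=X_i^L+[\log g(t),X_i^L]+\tfrac12[\log g(t),[\log g(t),X_i^L]]+\cdots$ has, in general, a \emph{nonzero} component in the top (seventh) layer, e.g.\ the term $[(\log g(t))_6,X_i^L]$. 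A covector supported on the top layer kills the terms of weight $\le 6$ but pairs nontrivially with exactly these weight-$7$ terms, so the condition is a genuine constraint on the curve, not an identity. (Already in $\FF_{2,3}$ one computes explicitly that a top-layer covector imposes nontrivial algebraic relations on the horizontal projection; in the Engel group the abnormal curves are only the translates of a single singular line.) The abnormality of $\bar\g$ is therefore a delicate fact that depends on the specific phase $\phi(t)=-\log t$ and on the step being as large as $7$; the paper does not prove it but imports it from the cited reference \cite[Section 5.1]{H20}. As written, your proof of assertion (a) is false, and no soft ``nilpotency bookkeeping'' will repair it.

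There is also a gap in Step 2: the identity $\k_{\bar\g}=\Psi\circ\k$ with $\Psi$ a planar analytic diffeomorphism is unjustified for a general stratified basis $\mc X$, because $\pi\circ(\varphi_n^{\mc X})^{-1}$ need not factor through the abelianization $\bar\pi$ (the first two exponential coordinates of the second kind associated to a non-left-invariant frame depend in general on all the coordinates of the point, so $\k_{\bar\g}(t)$ depends on the vertical part of $\bar\g(t)$ and not only on $\k(t)$); the claimed ``compatibility with the bundle projection'' holds for $\varphi_n^L$ (Remark~\ref{rem:diag_comm}) but not for arbitrary $\mc X$. The paper avoids this reduction: it fixes an arbitrary direction $x\in\R^2$, uses that $(x_1,x_2)\mapsto\pi\circ(\varphi_n^{\mc X})^{-1}(\exp(x_2X_2^L)\exp(x_1X_1^L))$ is a local analytic diffeomorphism together with the infinite winding of the spiral to find times $t_j\searrow 0$ with $\k_{\bar\g}(t_j)\in\R x$, and then concludes that any analytic $F$ vanishing along $\k_{\bar\g}$ vanishes on every line through the origin, hence identically. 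Your Step 3 (Newton--Puiseux arcs plus a sector argument) is a correct alternative to this last analyticity step for the spiral $\k$ itself, but it only becomes relevant once the reduction of Step 2 is repaired, and it does nothing to address the failure of Step 1.
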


\begin{proof}
	By \cite[Section 5.1]{H20} the curve $\bar\g$ is abnormal.
	
	We prove the second claim of the statement. Let $F:\R^2\to\R$ be an analytic function such that
	\begin{equation}
		\label{eq:F(k)=0}
		F(\k_{\bar\g}(t))=0, \quad \text{for all } t\in[0,1].
	\end{equation}
	Fix $x\in\R^2$. Since the map $\R^2\ni (x_1,x_2)\mapsto \pi\circ (\varphi_n^{\mc X})^\1 (\exp(x_2X_2^L)\exp(x_1X_1^L))$ is a local analytic diffeomorphism, by \eqref{eq:spiral} we have that there exists $t_j\searrow0$ such that $\k_{\bar\g}(t_j)\in\R x$. Therefore, by~\eqref{eq:F(k)=0}, the real analytic function $\R\ni s\mapsto F(sx)\in\R$ has a non-isolated zero at $s=0$. Thus $F(sx)=0$ for all $s\in\R$. Since $x\in\R^2$ was arbitrary, we in fact proved that $F$ must be identically 0. This prove that the smallest analytic variety containing the image of $\k_{\bar\g}$ is $\R^2$.
\end{proof}

This example shows that, if we drop the metabelian assumption, Corollary~\ref{cor:abnrk2} does not hold. Therefore, to prove the $C^1$ regularity of geodesics in non-metabelian sub-Riemannian manifolds a different approach is required. Notice that, the minimality of $\bar\g$ in $\FF$ is still an open problem.

\bibliographystyle{plain}

\bibliography{Biblio}

\end{document}